\newcommand{\real}{{\mathbb{R}}}
\newcommand{\reals}{{\mathbb{R}}}
\newcommand{\integer}{{\mathbb{Z}}}
\newcommand{\realpp}{{\mathbb{R}}_{\ge 0}}
\newcommand{\Lnorm}{\left\|} \newcommand{\Rnorm}{\right\|}
\newcommand{\eps}{\epsilon}
\newcommand{\vect}[1]{\boldsymbol{\mathbf{#1}}}
\newcommand{\dvect}[1]{\dot{\vect{#1}}}
 \newcommand{\boxend}{\hfill \ensuremath{\Box}}
\newcommand{\oprocendsymbol}{\hbox{$\bullet$}}
\newcommand{\oprocend}{\relax\ifmmode\else\unskip\hfill\fi\oprocendsymbol}
\newtheorem{thm}{Theorem}[section]
\newtheorem{rem}{Remark}[section]
\newtheorem{lem}{Lemma}[section]
\newtheorem{assump}{Assumption}
\begin{document}

\title{First-Order Dynamic Optimization for\\ Streaming Convex Costs}

\author{Mohammadreza Rostami, Hossein Moradian, and Solmaz S. Kia, \emph{Senior member, IEEE} %
  \thanks{This work was supported by NSF CAREER award ECCS 1653838. The authors are with the Department of Mechanical and Aerospace Engineering, University of California Irvine, Irvine, CA 92697,  
    {\tt\small \{mrostam2,hmoradia,solmaz\}@uci.edu.}}%
}

\markboth{}%
{}
\maketitle

\begin{abstract}
This paper proposes a set of novel optimization algorithms for solving a class of  convex optimization problems with time-varying streaming cost function. We develop an approach to track the  optimal solution with a bounded error. Unlike the existing results, our algorithm is executed only by using the first-order derivatives of the cost function which makes it computationally efficient for optimization with time-varying cost function. We compare our algorithms to the gradient descent algorithm and show why gradient descent is not an effective solution for optimization problems with time-varying cost. Several examples including solving a model predictive control problem cast  as a convex optimization problem with a streaming time-varying cost function demonstrate our results.

\end{abstract}
\begin{IEEEkeywords}
time-varying optimization,   convex optimization, machine learning, information stream
\end{IEEEkeywords}

\IEEEpeerreviewmaketitle

\section{Introduction}

There is growing attention in the broad domain of optimization and learning
 to the problems where traditional optimization technique cannot provide efficient solutions at time scales that match the pace of streaming data due to limitation on computational and communication resources~\cite{ling2013decentralized,MF-SP-AR:17,SR-WR:17,AS-AM-AK-GL-AR:16,ED-AS-SB-LM:19,BH-YZ-ZM-WR:20,esteki2022distributed,MR-SSK:23,BM-EM-DR-SH-BAA:17,YR-XZ-SCL-CJW:21}.
Power grids, networked autonomous systems, real-time data processing, learning methods, and data-driven control systems are among many applications that can significantly benefit from computationally efficient optimization algorithms where data points are processed without central storage. This paper aims to extend the current knowledge on such optimization algorithms with time-varying cost~function.

Consider a class of convex optimization programs where the objective is  time-varying. Formally, for a variable
$\vect{x}\in\real^n$ 
we define a time-varying objective function $f:\real^n\times\real_{\geq 0}\rightarrow\real$
taking values $f(\vect{x},t)$. 
The optimization problem is given as
\begin{align}\label{eq::opt}
  \vect{x}^{\star}(t) &= \arg\underset{{\vect{x}\in
    \reals^n}}{\min} 
  \,\,f(\vect{x},t),\quad t\in\real_{\geq0}.
\end{align}

See Fig.~\ref{fig:illustration_page1} for an illustration of the components of~\eqref{eq::opt}. Suppose at any time $t\in\realpp$ and any finite $\vect{x}\in\real^n$, we have $|f(\vect{x},t)|<\infty$. Moreover, at any $t\in\realpp$, optimization problem~\eqref{eq::opt} is solvable and its minimum value is finite, i.e., $|f(\vect{x}^\star(t),t)|=\mathsf{f}_t^\star<\infty$. In what follows, we denote $\vect{x}^\star(t)$ by $\vect{x}^\star_t$. Notice that given $t\mapsto\vect{x}^\star_t$ as the optimum solution of~\eqref{eq::opt}  we have  
\begin{align}\label{eq::KKT_timeVar}
 \nabla_{\vect{x}}f(\vect{x}_t^\star,t)=\vect{0},~~~\quad t\in\realpp,
 \end{align}
 which is a sufficient condition for $\vect{x}^\star_t$ being the optimal trajectory~\cite{SR-WR:17}.

 \begin{figure}[t]
  \centering
    \includegraphics[scale=0.09]{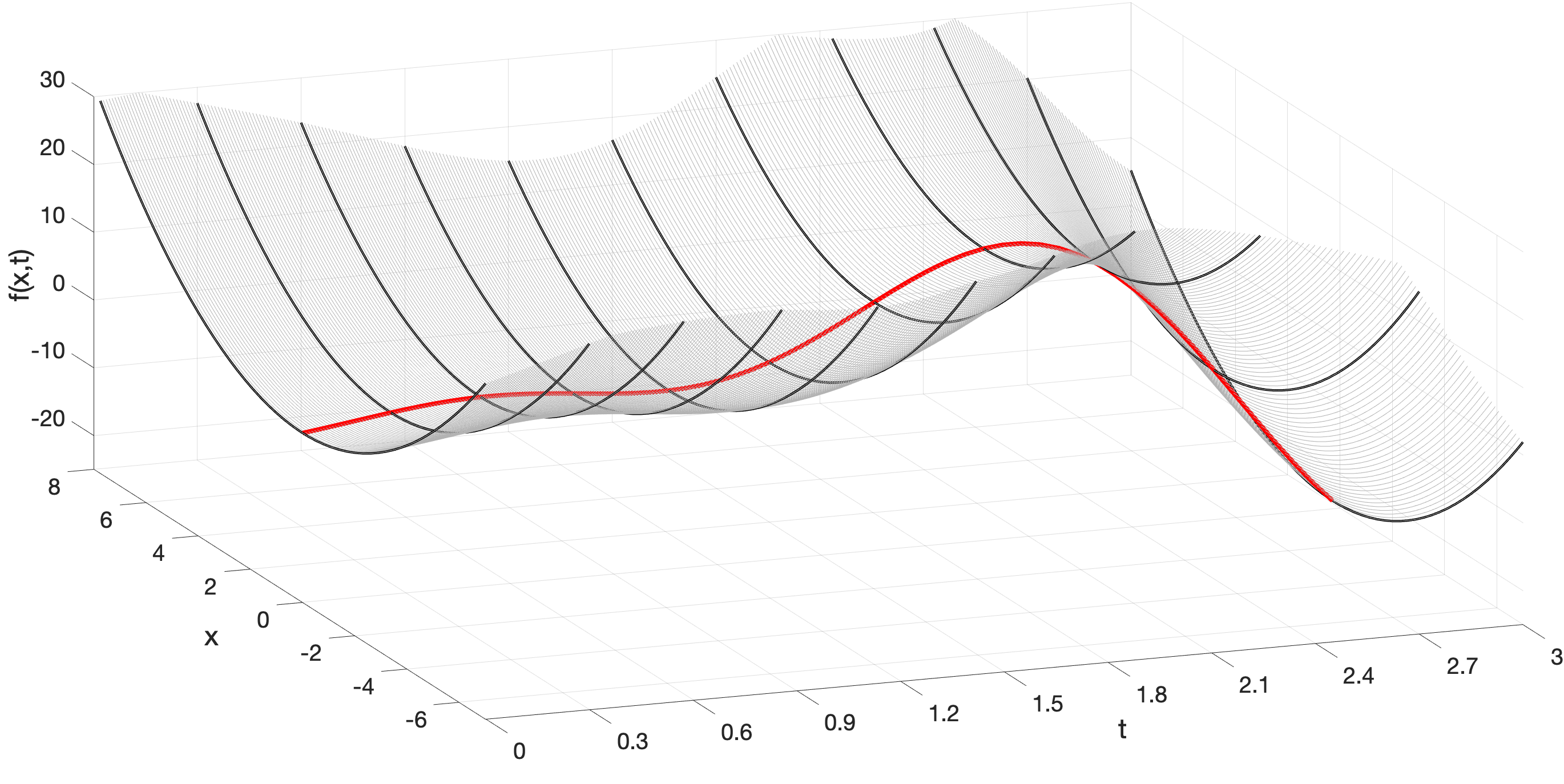}
      \caption{\small{A time-varying $f(\vect{x},t)$ vs. $\vect{x}$ and $t$ (gray plot) and the trajectory of $f(\vect{x}^\star(t),t)$ vs. $\vect{x}^\star(t)$ and $t$ (red curve).} }\label{fig:illustration_page1}
\end{figure}

A trivial way to solve~\eqref{eq::opt} is to sample the problem at particular
times, $t_k\in\real_{\geq0}$, and solve the corresponding
sequence of optimization problems assuming $f(\vect{x},t)=f(\vect{x}(t_k))$ for each time interval $t\in[t_k,t_{k+1})$. However, this 
implementation is expected to result in steady-state tracking errors.
This tracking error can be significant if optimal solution is drifting away from $\vect{x}$ at a rapid pace in each interval.


Recently, \cite{PCV-SJ-FB:22} and \cite{DHN-LVT-TK-SJJ:18} have  studied solving the time-varying convex optimization problems from a contraction theory perspective and has provided tracking error bounds between any solution trajectory and the equilibrium trajectory for continuous-time time-varying primal-dual dynamics. In the recent work~\cite{AD-VC-AG-GR-FB:23}, it is shown that for any contracting dynamics dependent on parameters, the tracking error is uniformly upper-bounded in terms of the contraction rate, the Lipschitz constant in which the parameter is involved, and the rate of change of the parameter.
 
On the other hand, for differentiable costs, by observing that taking the derivative of~\eqref{eq::KKT_timeVar} leads to  
 \begin{align}\label{eq::x_star_trajectory}
 \dvect{x}_t^\star = -\nabla_{\vect{xx}}^{-1} f(\vect{x}_t^\star,t) \nabla_{\vect{x}t} f(\vect{x}_t^\star,t),\quad t\in\realpp.
 \end{align}

Literature like~\cite{MF-SP-AR:17,SR-WR:17,ECH-RMW:15,YD-JL-MA:21} proposed continuous-time algorithms that aim to converge to trajectories satisfying~\eqref{eq::x_star_trajectory} asymptotically starting from any initial conditions. 
Following the same methodology, some prediction-correction approaches are  proposed in~\cite{AS-AM-AK-GL-AR:16,ED-AS-SB-LM:19,NB-AS-RC:20,NB:21} in the discrete-time framework. However, these existing algorithms use the second-order derivatives of the cost to achieve convergence. Similar to~\eqref{eq::x_star_trajectory}, they also require the inverse of the Hessian, which adds to the computational cost of these algorithms.

Although elegant, all these approaches suffer from high computational complexity as the algorithms require second-order derivatives of cost function as well as computing the inverse of the Hessian, which is not efficient for high dimensional problems. Requiring to compute the inverse of Hessian also limits the use of these algorithms for non-convex optimization problems where the inverse may not exists. The use of Hessian has also proven to be restrictive in the design of distributed optimization algorithms that are inspired by these second-order algorithms. For example, the algorithms in~\cite{SR-WR:17} and~\cite{BH-YZ-ZM-WR:20} require that Hessians of all the local objective functions be identical.

In this paper, we propose a  solution for the optimization problem~\eqref{eq::opt} in a discrete-time framework, which guarantees convergence to the neighborhood of the optimal solution. Our contribution is that our proposed algorithms are executed by using only the first-order derivatives of the cost function, which reduces the computational costs compared to the Hessian-based algorithms and can also be used in solving non-convex optimization problems. Our first-order algorithms are also designed in discrete-time.

\section{Preliminaries}\label{sec::prelim}
This section defines our notation and states our standing assumptions through out the paper. 
\subsection{Notation}

The set of real numbers is $\real$. For a vector $\vect{x}\in\reals^n$ we define  the Euclidean and infinity norms by, respectively,  $\|\vect{x}\|\!=\!\sqrt{\vect{x}^\top\vect{x}}$ and $\|\vect{x}\|_\infty\!=\!\max{|x_i|}_{i=1}^n$.  

The partial derivatives of a function $f(\vect{x},t):\real^n\times\real_{>0}\rightarrow\real^n$ with respect to $\vect{x}\in\real^n$ and $t\in\real_{>0}$ are given by $\nabla_{\vect{x}}f(\vect{x},t)$ (referred to hereafter as `gradient') and $\nabla_{t}f(\vect{x},t)$. Also, $\nabla_{\vect{x}t}f(\vect{x},t)$ and $\nabla_{\vect{x}\vect{x}}f(\vect{x},t)$ are respectively  the partial derivatives of $\nabla_{\vect{x}}f(\vect{x},t)$ with respect to $t$ and $\vect{x}$. 
A differentiable function $f: \reals^d\to\reals$ is \emph{$m$-strongly convex} ($m\in\real_{>0}$) in $\real^d$ if and only if
\begin{equation*}
  (\vect{z}-\vect{x})\!^\top\!(
  \nabla f(\vect{z})-\nabla f(\vect{x}))\!\geq
  m\|\vect{z}-\vect{x}\|^2, ~~\forall
  \vect{x},\vect{z}\in \!\real^d,~\vect{x}\!\neq\!\vect{z}. 
\end{equation*}
For twice differentiable function $f$ the $m$-strong convexity is equivalent to $\nabla^2 f(\vect{x})\succeq m \vect{I},~\forall
  ~\vect{x}\in \real^d. $
The gradient of a differentiable function $f:\real^d\to\real$ is globally Lipschitz with constant $M\in\real_{>0}$ (hereafter referred to simply as \emph{$M$-Lipschitz}) 
if and only if
\begin{align}\label{eq::Lipsch}
  \Lnorm \nabla \vect{f}(\vect{x})-\nabla \vect{f}(\vect{y})\Rnorm\leq M\Lnorm
  \vect{x}-\vect{y}\Rnorm, \quad \forall\, \vect{x},\vect{y}\in \real^d.
\end{align}
For twice differentiable functions, condition~\eqref{eq::Lipsch} is equivalent to $\nabla^2 f(\vect{x})\preceq 
  M \vect{I},~\forall
  ~\vect{x}\in \real^d$~\cite{YN:18}. A differentiable $m$-strongly function with a globally $M$-Lipschitz gradient for all $\vect{x},\vect{y}\in\real^d$ also satisfies~\cite{YN:18}
  \begin{subequations}
  \begin{align}
   &   \nabla f(\vect{x})\!^\top\!(\vect{y}-\vect{x})+\frac{m}{2}\|\vect{y}-\vect{x}\|^2\leq f(\vect{y})-f(\vect{x})\nonumber\\
      &\qquad\quad\quad \leq \nabla f(\vect{x})^\top(\vect{y}-\vect{x})+\frac{M}{2}\|\vect{y}-\vect{x}\|^2,\label{eq::convex_lipsc}\\
 &     \frac{1}{2M}\|\nabla f(\vect{x})\|^2\leq f(\vect{x})-f^\star \leq \frac{1}{2m}\|\nabla f(\vect{x})\|^2,\label{eq::dist_optim_cost}
  \end{align}
  \end{subequations}
  where $f^\star$ is the minimum value of $f$.

\subsection{Assumptions on the streaming cost}
We start by some well-posedness conditions on the steaming cost. 
\begin{assump}[Well-posedness conditions]\label{asm:str_convexity}
{\rm 
At any time $t\in\realpp$ and any finite $\vect{x}\in\real^n$, we have $|f(\vect{x},t)|<\infty$. Moreover, at any $t\in\realpp$, optimization problem~\eqref{eq::opt} is solvable and its minimum value is finite, i.e., $|f(\vect{x}^\star(t),t)|=\mathsf{f}_t^\star<\infty$.
} \boxend
\end{assump}
The optimality condition~\eqref{eq::KKT_timeVar} characterizes the solution of the optimization problem~\eqref{eq::opt}. Under the following assumption, the solution $t\mapsto\vect{x}_t^\star$ is unique~\cite{ALD-RTR:09}.
\begin{assump}[Conditions for the existence of a unique solution]\label{asm:str_convexity}
{\rm 
The cost function $f(\vect{x},t):\real^n\times\realpp\rightarrow\real$ is
twice continuously differentiable with respect to $\vect{x}$ and continuously differentiable with respect to $t$ and globally Lipschitz in $t$. 
Moreover,
the cost function is $m$-strongly convex and has $M$-Lipschitz gradient in $\vect{x}$, i.e.,   $$m\,\vect{I}_n\preceq \nabla_{\vect{x}\vect{x}}f(\vect{x},t)\preceq M\,\vect{I}_n,\quad  t\in\realpp.$$ 
} \boxend
\end{assump}

 Beside Assumption~\ref{asm:str_convexity}, to establish our convergence, we place some other conditions on the cost as stated in Assumption~\eqref{asm:bound_dfstar} below. Note that similar Assumptions are also used in the existing second-order time-varying optimization algorithms. 
\begin{assump}[Smoothness of the cost function]\label{asm:bound_dfstar}
{\rm 
 The function  $f(\vect{x},t):\real^n\times\real\rightarrow\real$ is  continuously differentiable and sufficiently smooth. Specifically,
there exists a bound on the derivative of $f(\vect{x},t)$ as 
\begin{align*}
&\|\nabla_{\vect{x}t}f(\vect{x},t)\|\leq K_2, \\
\qquad &|\nabla_{t}f(\vect{x},t)|\leq K_1,\quad  |\nabla_{tt}f(\vect{x},t)|\leq K_3
\end{align*}
for any $\vect{x}\in\real^n$, $t\in\real_{\geq0}$. }\boxend
\end{assump}
By virtue of \cite[Lemma 3.3]{HKK:02}, Assumption~\ref{asm:bound_dfstar} leads to cost function $f(\vect{x},t)$ and its first derivatives $\nabla_{t}f(\vect{x},t)$ and $\nabla_{\vect{x}}f(\vect{x},t)$ being globally Lipschitz in $t\in\real_{\geq0}$ with respective constants $K_1$, $K_2$ and $K_3$. 

In what follows, we let
\begin{align}
    f^\star(t)=f(\vect{x}^\star(t),t),
\end{align}
be the optimal cost value at any $t\in\real_{\geq0}$. 

\begin{lem}[Bound on the  cost difference of the optimizer]\label{bound_traj_opt}{\rm
Consider the optimization problem~\eqref{eq::opt} under Assumptions~\ref{asm:str_convexity} and~\ref{asm:bound_dfstar}. Then,
\begin{align}\label{eq::bound_optimal_trajectory}
    |f^\star(t_{k+1})-f^\star(t_{k})|\leq \psi,
\end{align}
where $\psi=\delta (K_1 + \frac{\delta}{2}K_3) + \frac{K^2_2 \delta^2}{2m} (\frac{M\delta}{m} + 2)$,
with $\delta=t_{k+1}-t_k\in\real_{>0}$ being the sampling timestep of the optimal cost function across time.
}
\end{lem}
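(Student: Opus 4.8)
The plan is to split the optimal-cost increment into a purely temporal and a purely spatial part by inserting the cross term $f(\vect{x}^\star_{t_k},t_{k+1})$:
\begin{align*}
 f^\star(t_{k+1})-f^\star(t_k)
 &= \underbrace{f(\vect{x}^\star_{t_{k+1}},t_{k+1})-f(\vect{x}^\star_{t_k},t_{k+1})}_{A}\\
 &\quad+\underbrace{f(\vect{x}^\star_{t_k},t_{k+1})-f(\vect{x}^\star_{t_k},t_k)}_{B}.
\end{align*}
In $B$ the optimizer is frozen and only time advances, so it should be controlled purely by the temporal smoothness of Assumption~\ref{asm:bound_dfstar}; in $A$ time is frozen at $t_{k+1}$ and only the point moves, so it should be controlled by the convexity geometry of $f(\cdot,t_{k+1})$ together with a bound on the optimizer drift $\|\vect{x}^\star_{t_{k+1}}-\vect{x}^\star_{t_k}\|$. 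I expect the two groups of $\psi$ to emerge from $B$ and $A$ respectively, after a final triangle inequality $|f^\star(t_{k+1})-f^\star(t_k)|\le|A|+|B|$.

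For $B$ I would Taylor-expand in $t$ about $t_k$ with Lagrange remainder, bounding the first-order coefficient by $|\nabla_{t}f|\le K_1$ and the remainder by $|\nabla_{tt}f|\le K_3$. This yields $|B|\le K_1\delta+\tfrac{\delta^2}{2}K_3=\delta\big(K_1+\tfrac{\delta}{2}K_3\big)$, exactly the first group in $\psi$.

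The crux is $A$, and it hinges on a drift estimate for the minimizers. Here I would use the optimality conditions $\nabla_{\vect{x}}f(\vect{x}^\star_{t_k},t_k)=\vect{0}$ and $\nabla_{\vect{x}}f(\vect{x}^\star_{t_{k+1}},t_{k+1})=\vect{0}$ from~\eqref{eq::KKT_timeVar}. Freezing the point at $\vect{x}^\star_{t_k}$ and using that the gradient is $K_2$-Lipschitz in $t$ (a consequence of $\|\nabla_{\vect{x}t}f\|\le K_2$) gives a ``wrong-time'' gradient that is already small,
\[
 \|\nabla_{\vect{x}}f(\vect{x}^\star_{t_k},t_{k+1})\|
 =\|\nabla_{\vect{x}}f(\vect{x}^\star_{t_k},t_{k+1})-\nabla_{\vect{x}}f(\vect{x}^\star_{t_k},t_k)\|\le K_2\delta .
\]
Invoking $m$-strong convexity of $f(\cdot,t_{k+1})$, whose minimizer is $\vect{x}^\star_{t_{k+1}}$, converts this gradient bound into a distance bound, $\|\nabla_{\vect{x}}f(\vect{x}^\star_{t_k},t_{k+1})\|\ge m\|\vect{x}^\star_{t_k}-\vect{x}^\star_{t_{k+1}}\|$, so that $\|\vect{x}^\star_{t_{k+1}}-\vect{x}^\star_{t_k}\|\le K_2\delta/m$. (Equivalently one may integrate $\dvect{x}^\star_t$ from~\eqref{eq::x_star_trajectory}, whose norm is at most $K_2/m$.)

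Finally I would expand $f(\cdot,t_{k+1})$ to second order about $\vect{x}^\star_{t_k}$ with Lagrange remainder, then apply the triangle inequality together with $\nabla_{\vect{x}\vect{x}}f\preceq M\vect{I}_n$, giving
\[
 |A|\le\|\nabla_{\vect{x}}f(\vect{x}^\star_{t_k},t_{k+1})\|\,\|\vect{x}^\star_{t_{k+1}}-\vect{x}^\star_{t_k}\|+\tfrac{M}{2}\|\vect{x}^\star_{t_{k+1}}-\vect{x}^\star_{t_k}\|^2 .
\]
Substituting the two estimates above collects to $|A|\le\tfrac{K_2^2\delta^2}{m}+\tfrac{MK_2^2\delta^2}{2m^2}=\tfrac{K_2^2\delta^2}{2m}\big(\tfrac{M}{m}+2\big)$, matching the second group in $\psi$, and adding $|A|+|B|$ establishes~\eqref{eq::bound_optimal_trajectory}. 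The single real obstacle is the drift estimate: the whole argument works only because the gradient evaluated at the stale optimizer $\vect{x}^\star_{t_k}$ but fresh time $t_{k+1}$ is $O(\delta)$, and strong convexity then upgrades this into an $O(\delta)$ control of $\|\vect{x}^\star_{t_{k+1}}-\vect{x}^\star_{t_k}\|$; the remaining steps are routine Taylor bookkeeping.
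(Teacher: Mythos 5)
Your proof is correct, and it reaches the paper's bound by a route that differs in two respects. First, the decomposition: the paper performs a single joint second-order Taylor expansion of $f$ in $(\vect{x},t)$ about $(\vect{x}^\star_k,t_k)$ (its~\eqref{eq::star_bound}), in which the first-order spatial term vanishes by optimality and the mixed term $\nabla_{\vect{x}t}f(\zeta,\theta)^\top(\vect{x}^\star_{k+1}-\vect{x}^\star_k)\,\delta$ appears explicitly; you instead insert the cross value $f(\vect{x}^\star_{t_k},t_{k+1})$ and run two one-variable expansions. The bookkeeping matches term by term: your product $\|\nabla_{\vect{x}}f(\vect{x}^\star_{t_k},t_{k+1})\|\,\|\vect{x}^\star_{t_{k+1}}-\vect{x}^\star_{t_k}\|\leq (K_2\delta)(K_2\delta/m)$ plays exactly the role of the paper's mixed-derivative contribution $K_2\cdot(K_2\delta/m)\cdot\delta$, and the temporal and Hessian terms coincide. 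Second, and more substantively, the drift estimate $\|\vect{x}^\star_{k+1}-\vect{x}^\star_{k}\|\leq K_2\delta/m$, which is the crux of the lemma: the paper imports it from a parametric-stability result (Theorem 2F.10 of~\cite{ALD-RTR:09}), whereas you derive it in a few lines from the optimality conditions~\eqref{eq::KKT_timeVar}, the $K_2$-Lipschitz dependence of $\nabla_{\vect{x}}f$ on $t$, and $m$-strong convexity; this makes your argument self-contained, which is a genuine (if modest) improvement. One discrepancy deserves note: your final constant is $\frac{K_2^2\delta^2}{2m}\big(\frac{M}{m}+2\big)$, while the lemma states $\frac{K_2^2\delta^2}{2m}\big(\frac{M\delta}{m}+2\big)$. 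Carrying out the paper's own bounds also yields $\frac{M}{m}$, since the Hessian term is $\frac{M}{2}\|\vect{x}^\star_{k+1}-\vect{x}^\star_k\|^2\leq \frac{MK_2^2\delta^2}{2m^2}$ and not $\frac{MK_2^2\delta^3}{2m^2}$; the extra $\delta$ in the stated $\psi$ appears to be a typo, and your expression is the one the proof actually supports.
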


\begin{proof} Given Assumptions~\ref{asm:str_convexity} and~\ref{asm:bound_dfstar}, it follows from~\cite[Theorem 2F.10]{ALD-RTR:09}, that the trajectory of $\vect{x}^\star(t)$ satisfy 
\begin{align}\label{eq_bound_opt_traj}
    \|\vect{x}^\star_{k+1}-\vect{x}^\star_{k}\| \leq \frac{1}{m} \|\nabla_{\vect{x}t}f(\vect{x}_k,t_k)\| (t_{k+1}-t_k)\leq \frac{K_2\delta}{m}.~
\end{align}
Using Taylor series expansion~\cite{DPB:99} we can write
\begin{align}\label{eq::star_bound}
 &f(\vect{x}^\star_{k+1},t_{k+1})-f(\vect{x}^\star_{k},t_{k})=\nabla_{\vect{x}} f(\vect{x}^\star_{k},t_k)^\top(\vect{x}^\star_{k+1}-\vect{x}^\star_{k})\nonumber\\&+\nabla_{t} f(\vect{x}_{k},t_k)(t_{k+1}-t_k)+\frac{(t_{k+1}-t_k)^2}{2}\nabla_{tt} f(\zeta,\theta)+\nonumber\\
 &+\frac{1}{2}(\vect{x}^\star_{k+1}-\vect{x}^\star_{k})^\top \nabla_{\vect{xx}}f(\zeta,\theta)(\vect{x}^\star_{k+1}-\vect{x}^\star_{k})+\nonumber\\
 &\quad\nabla_{\vect{x}t} f(\zeta,\theta)^\top(\vect{x}^\star_{k+1}-\vect{x}^\star_{k})(t_{k+1}-t_k),
\end{align}
where $\zeta\in[\vect{x}^\star_k,\vect{x}^\star_{k+1})$ component-wise and $\theta\in[t_k,t_{k+1})$. The proof then follows from upper-bounding the right hand side of~\eqref{eq::star_bound} using the bounds in Assumptions~\eqref{asm:str_convexity} and \eqref{asm:bound_dfstar} and the bounds established in~\eqref{eq_bound_opt_traj} and~\eqref{eq::star_bound}.
\end{proof}

\section{Objective statement}\label{sec::Prob_formu}


 \begin{figure}[t]
  \centering
    \includegraphics[scale=0.29]{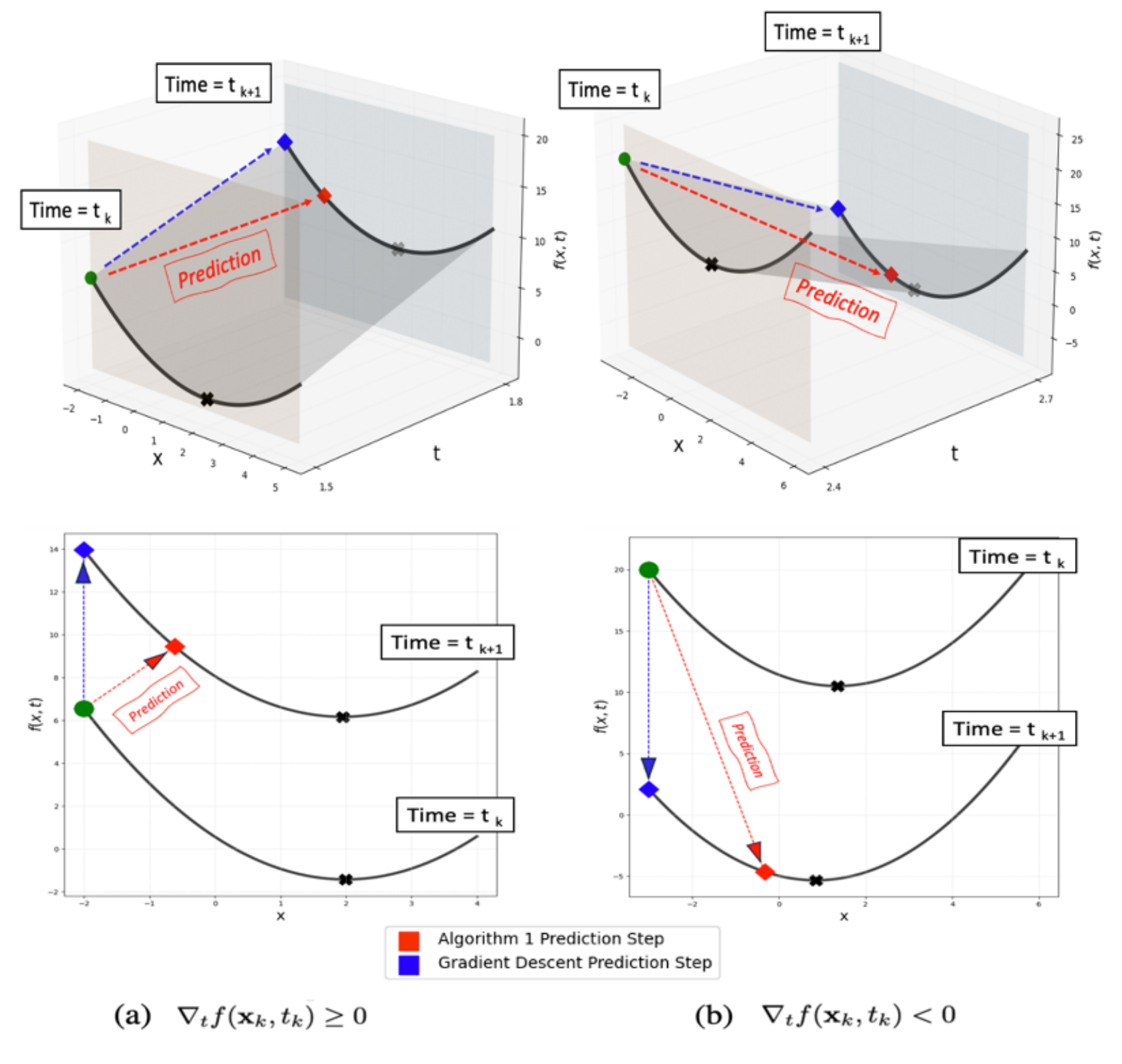}
      \caption{\small{An example case that demonstrates the role of the prediction step (line 3) of Algorithm \ref{Alg::1}. As we can see in this example, for both cases of $\nabla_t f(\vect{x}_k,t_k) \geq 0 $ (plots in the left column) and $\nabla_t f(\vect{x}_k,t_k) < 0 $ (plots in the right column) the statement of Lemma~\ref{lem::grad_des_vs_Alg1} holds, i.e., $ f^-_1(t_{k+1})\leq f^-_{\text{g}}(t_{k+1})$.}}
      \label{fig:example-1}
\end{figure}

 For a first-order solver for problem~\eqref{eq::opt}, some work such as~\cite{AYP:05} investigated use of the conventional gradient descent~algorithm 
\begin{align}\label{eq::grad_desc}
    &\vect{x}_{k+1}=\vect{x}_k-\alpha\, \nabla_{\vect{x}} f(\vect{x}_k,t_{k+1}).
\end{align}
Considering the first-order approximation of the cost at $t_{k+1}$,
\begin{align}
 f(\vect{x}_{k+1},t_{k+1})\approx& f(\vect{x}_{k},t_{k+1})+\nabla_{\vect{x}}f(\vect{x}_{k},t_{k+1})^\top(\vect{x}_{k+1}-\vect{x}_{k}),
\end{align}
the gradient decent algorithm~\eqref{eq::grad_desc} certainly results in function reduction at each $t_{k+1}$. On the other hand, considering the first-order approximation across time from $t_k$ to $t_{k+1}$, 
\begin{align}\label{eq::taylor_time}
 f(\vect{x}_{k},t_{k+1})\approx& f(\vect{x}_{k},t_{k})+\nabla_{t}f(\vect{x}_{k},t_k)^\top\delta .
\end{align}
where $\delta=t_{k+1}-t_k$, we see that the gradient descent algorithm~\eqref{eq::grad_desc} is oblivious to $\nabla_{t}f(\vect{x}_{k},t_k)^\top(t_{k+1}-t_k)$ and how cost is changing across time. Iterative optimization algorithms for unconstrained problems are driven by successive descent objective. However, if $\nabla_{t}f(\vect{x}_{k},t_k)^\top>0$, we have  $f(\vect{x}_{k},t_{k+1})>f(\vect{x}_{k},t_{k})$ and thus the function reduction at $t_{k+1}$ after taking the gradient descent algorithm~\eqref{eq::grad_desc}  is not the same as if $f(\vect{x}_{k},t_{k+1})\approx f(\vect{x}_{k},t_{k})$ (in first-order sense). Thus, one can anticipate that the gradient descent algorithm will result in poor tracking~performance during periods of time that $\nabla_{t}f(\vect{x}(t),t)>0$. 

Let us re-write the gradient decent algorithm~\eqref{eq::grad_desc} as 
\begin{subequations}
\begin{align}\label{eq::grad_desc_alt}
 \vect{x}^-_{k+1}&=\vect{x}_{k},\\
   \vect{x}_{k+1}&=\vect{x}_{k+1}^--\alpha\, \nabla_{\vect{x}} f(\vect{x}_{k+1}^-,t_{k+1}).
\end{align}
\end{subequations}
We refer to $\vect{x}^-_{k+1}$ as \emph{predicted} decision variable at time $t_{k+1}$ and to $\vect{x}_{k+1}$ as the \emph{updated} decision variable at $t_{k+1}$. We expect that at each time $t_{k+1}$, the updated decision variable gets close to $\vect{x}^\star(t_{k+1})$ by virtue of function descent. In the subsequent sections, we set to design alternative algorithms which consider the variation of the cost across time and employ prediction rules that will result in a better tracking performance than that of the gradient descent algorithm.

\section{First-order algorithms}\label{first_order}
In this section, we propose two classes of first-order algorithms to solve optimization problem~\eqref{eq::opt}, using prediction and update steps that take advantage of the first-order term $\nabla_{t} f(\vect{x}{k},t_k)$ to improve convergence performance. Algorithm~\ref{Alg::1} is our first proposed algorithm. The structure of this algorithm consists of two steps: the \emph{prediction step} changes the local state based on the rate of change of the cost function with respect to time. The subsequent \emph{update step} is a gradient descent step at freeze time $t_{k+1}$. The following Lemma reveals the advantage of the prediction step of Algorithm~\ref{Alg::1} over the gradient descent algorithm~\eqref{eq::grad_desc_alt}, which lacks prediction oversight.

\begin{lem}\label{lem::grad_des_vs_Alg1}
Consider the gradient descent algorithm~\eqref{eq::grad_desc_alt} and Algorithm~\ref{Alg::1}. Let $\delta=(t_{k+1}-t_k)$ be the same for both algorithms. Let $f^-_{\text{g}}(t_{k+1})$ and $f^-_1(t_{k+1})$ be, respectively, the function value  of the gradient descent algorithm and Algorithm~\ref{Alg::1} after the prediction step. Suppose $\vect{x}_k$ for both algorithms is the same. Then, for any $\eps\in\real_{\geq0}$, we have $ f^-_1(t_{k+1})\leq f^-_{\text{g}}(t_{k+1})$ in first-order approximate sense.
\end{lem}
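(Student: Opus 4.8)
The plan is to evaluate both post-prediction cost values and compare them through a single first-order Taylor expansion in the decision variable at the frozen time $t_{k+1}$. First I would observe that the prediction step of the gradient-descent form~\eqref{eq::grad_desc_alt} is inert, $\vect{x}^-_{k+1}=\vect{x}_k$, so that $f^-_{\text{g}}(t_{k+1})=f(\vect{x}_k,t_{k+1})$ exactly. For Algorithm~\ref{Alg::1}, the prediction step displaces the iterate by an amount driven by the temporal rate $\nabla_{t} f(\vect{x}_k,t_k)$; writing this displacement as $\vect{p}_k:=\vect{x}^-_{k+1}-\vect{x}_k$, we have $f^-_1(t_{k+1})=f(\vect{x}_k+\vect{p}_k,t_{k+1})$. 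Since both algorithms are initialized at the common $\vect{x}_k$, the entire comparison collapses onto the single displacement $\vect{p}_k$.

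Next I would expand $f(\vect{x}_k+\vect{p}_k,t_{k+1})$ to first order about $\vect{x}_k$ at the fixed time $t_{k+1}$,
\[
f^-_1(t_{k+1})\approx f(\vect{x}_k,t_{k+1})+\nabla_{\vect{x}}f(\vect{x}_k,t_{k+1})^\top\vect{p}_k,
\]
so that, subtracting the gradient-descent value, the claim $f^-_1(t_{k+1})\le f^-_{\text{g}}(t_{k+1})$ becomes exactly the statement that $\vect{p}_k$ is a first-order descent direction for the frozen cost, i.e.
\[
\nabla_{\vect{x}}f(\vect{x}_k,t_{k+1})^\top\vect{p}_k\le 0.
\]
This inequality, read in the first-order approximate sense, is the whole content to be verified; the boundary case $\eps=0$ gives $\vect{p}_k=\vect{0}$ and hence $f^-_1=f^-_{\text{g}}$, confirming that the ``$\le$'' is consistent and tight.

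To establish the descent inequality I would transport everything back to quantities evaluated at $t_k$, where the prediction is actually computed, using the first-order time expansion $\nabla_{\vect{x}}f(\vect{x}_k,t_{k+1})\approx\nabla_{\vect{x}}f(\vect{x}_k,t_k)+\delta\,\nabla_{\vect{x}t}f(\vect{x}_k,t_k)$ afforded by Assumption~\ref{asm:bound_dfstar}. Substituting the explicit form of $\vect{p}_k$ (proportional to $\eps\ge0$ and to a descent direction of $f(\cdot,t_k)$ modulated by $\nabla_{t}f(\vect{x}_k,t_k)$), the inner product reduces to a leading term of definite sign plus higher-order corrections. Mirroring the two columns of Fig.~\ref{fig:example-1}, I would then split into the cases $\nabla_{t}f(\vect{x}_k,t_k)\ge0$ and $\nabla_{t}f(\vect{x}_k,t_k)<0$ and check that the leading term is non-positive in each, the sign of $\nabla_{t}f$ being absorbed by the construction of the prediction direction so as to keep it aligned with $-\nabla_{\vect{x}}f$.

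The main obstacle I anticipate is precisely this sign bookkeeping: ensuring the prediction displacement stays a descent direction for the cost at $t_{k+1}$ uniformly over both signs of $\nabla_{t}f(\vect{x}_k,t_k)$ and over all $\eps\ge0$. This hinges on the leading-order alignment $\nabla_{\vect{x}}f(\vect{x}_k,t_{k+1})^\top\nabla_{\vect{x}}f(\vect{x}_k,t_k)\approx\|\nabla_{\vect{x}}f(\vect{x}_k,t_k)\|^2\ge0$, so that the cross terms generated by $\delta\,\nabla_{\vect{x}t}f$ and by curvature are genuinely higher order and may be legitimately discarded under the ``first-order approximate sense'' qualifier. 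Making that discarding precise rather than heuristic is the delicate point, and I would control the neglected remainders through the Lipschitz and boundedness bounds of Assumptions~\ref{asm:str_convexity} and~\ref{asm:bound_dfstar}.
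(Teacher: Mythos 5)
Your overall strategy coincides with the paper's: reduce the comparison, via a first-order Taylor expansion, to the sign of the inner product between the gradient and the prediction displacement $\vect{p}_k$. (The paper expands jointly in $(\vect{x},t)$ about $(\vect{x}_k,t_k)$ rather than at the frozen time $t_{k+1}$; the cross term $\delta\,\nabla_{\vect{x}t}f(\vect{x}_k,t_k)^\top\vect{p}_k$ you would then have to discard is $O(\delta^2)$ because $\vect{p}_k$ is itself $O(\delta)$, so the two bookkeepings agree in the first-order sense.) However, you have misread Algorithm~\ref{Alg::1}, and this is a genuine error, not a cosmetic one. Line 3 of the algorithm gives
\[
\vect{p}_k=-\,\delta\,\frac{|\nabla_t f(\vect{x}_k,t_k)|}{\|\nabla_{\vect{x}}f(\vect{x}_k,t_k)\|^2}\,\nabla_{\vect{x}}f(\vect{x}_k,t_k),
\]
which is in no way ``proportional to $\eps$'': the parameter $\eps$ enters only as a threshold selecting which branch (line 3 or line 5) executes. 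Consequently your boundary-case claim is backwards: $\eps=0$ does not give $\vect{p}_k=\vect{0}$; it makes the prediction step fire whenever the gradient is nonzero. The inert case $\vect{p}_k=\vect{0}$, where $f^-_1=f^-_{\text{g}}$ trivially, is the branch $\|\nabla_{\vect{x}}f(\vect{x}_k,t_k)\|<\eps$, and a proof valid ``for any $\eps\in\real_{\geq0}$'' must treat exactly these two branches, which your argument as written never does.

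Once the displacement is read correctly, the ``delicate point'' you flag dissolves into the one-line computation your proposal leaves undone: substituting $\vect{p}_k$ yields
\[
\nabla_{\vect{x}}f(\vect{x}_k,t_k)^\top\vect{p}_k=-\,\delta\,|\nabla_t f(\vect{x}_k,t_k)|\;\le\;0,
\]
with no case split on the sign of $\nabla_t f(\vect{x}_k,t_k)$ needed --- the absolute value in line 3 keeps $\vect{p}_k$ along $-\nabla_{\vect{x}}f(\vect{x}_k,t_k)$ for either sign, which is exactly why the paper's proof contains no such split (the two columns of the figure are illustration, not proof structure). In the paper's bookkeeping this gives $f^-_1(t_{k+1})\approx f(\vect{x}_k,t_k)+\delta\big(\nabla_t f(\vect{x}_k,t_k)-|\nabla_t f(\vect{x}_k,t_k)|\big)\le f(\vect{x}_k,t_k)+\delta\,\nabla_t f(\vect{x}_k,t_k)\approx f^-_{\text{g}}(t_{k+1})$, and the heavier machinery you invoke to control remainders via Assumptions~\ref{asm:str_convexity} and~\ref{asm:bound_dfstar} is unnecessary, since the lemma is claimed only in the first-order approximate sense.
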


\begin{proof}
The first-order approximation of $f(\vect{x}^-_{k+1},t_{k+1})$ is
\begin{align*}
f(\vect{x}^-_{k+1},t_{k+1})\approx &f(\vect{x}_k,t_{k})\,+\nabla_{\vect{x}} f(\vect{x}_k,t_k)(\vect{x}^-_{k+1}-\vect{x}_k)\\
&+\nabla_t f(\vect{x}_k,t_k)(t_{k+1}-t_k).
\end{align*}
For gradient descent algorithm by substitution we obtain $f^-_{\text{g}}(t_{k+1})\approx f(\vect{x}_k,t_{k})+\delta\,\nabla_t f(\vect{x}_k,t_k)$. For Algorithm~\ref{Alg::1}, for $\|\nabla_{\vect{x}}f(\vect{x}_k,t_k)\|<\eps$, we have $f^-_{\text{g}}(t_{k+1})=f^-_1(t_{k+1})\approx f(\vect{x}_k,t_{k})+\delta\,\nabla_t f(\vect{x}_k,t_k)$. For $\|\nabla_{\vect{x}}f(\vect{x}_k,t_k)\|\geq \eps$, on the other hand we have 
\begin{align*}
f^-_1(t_{k+1})\approx\,& f(\vect{x}_k,t_{k})+\nabla_{\vect{x}} f(\vect{x}_k,t_k)(\vect{x}^-_{k+1}-\vect{x}_k)\\
&+\delta \,\nabla_t f(\vect{x}_k,t_k) \\
=\,&f(\vect{x}_k,t_{k})+\delta\,(\nabla_t f(\vect{x}_k,t_k)-|\nabla_t f(\vect{x}_k,t_k)|),
\end{align*}
confirming $f^-_1(t_{k+1})\leq f^-_{\text{g}}(t_{k+1})$, and completing the proof.
\end{proof}
See Fig.~\ref{fig:illustration} for a trajectories generated by gradient descent algorithm and Algorithm~\ref{Alg::1}, which shows Algorithm~\ref{Alg::1} results in a lower tracking error. Next, we present the convergence analysis of Algorithm~\ref{Alg::1}.
\medskip
\begin{thm}[Convergence analysis of Algorithms~\ref{Alg::1}]\label{thm::main1}
{\rm Let 
Assumptions~\ref{asm:str_convexity} and~\ref{asm:bound_dfstar}  hold. Then,  Algorithm~\ref{Alg::1} converges to the neighborhood of the optimum solution of~\eqref{eq::opt} with the following upper bound
\begin{align}\label{eq::bound_on_solu_1}
&f(\vect{x}_{k+1},t_{k+1})-f^*(t_{k+1}) \leq \frac{\big(1-(1-2\kappa\alpha m)^k\big)}{4\kappa^2\alpha^2 m} \psi\nonumber\\&
\quad\qquad +\frac{\big(1-(1-2\kappa\alpha m)^k\big)}{4\kappa^2\alpha^2 m^2}\max(\gamma\delta^2,
\mu\delta)\nonumber\\
&\quad \qquad+
(1-2\kappa\alpha m)^k(f(\vect{x}_{0},t_{0})-f^*(t_{0})),
\end{align}
where $\psi = \delta (K_1 + \frac{\delta}{2}K_3) + \frac{K^2_2 \delta^2}{2m} (\frac{M\delta}{m} + 2)$, $\delta=t_{k+1}-t_k$, $\gamma= \frac{2K_1}{\delta} + \frac{M}{2\epsilon^2}K_1^2+\frac{1}{2}K_3 + \frac{1}{\epsilon}K_1K_2$, $\mu=K_1+\frac{\delta}{2}K_3$ and $\kappa=(1-\alpha M/2)$, provided that $0<\alpha\leq \frac{1}{2M}$.
}
\end{thm}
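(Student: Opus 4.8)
The plan is to track the suboptimality gap $V_k:=f(\vect{x}_k,t_k)-f^\star(t_k)$ and to show that it obeys a one-step scalar recursion of the form $V_{k+1}\le \rho\,V_k + b$, with contraction factor $\rho=1-2\kappa\alpha m$ and an additive perturbation $b$ that collects the effect of the \emph{prediction step} and of the temporal drift of the cost. The closed-form bound \eqref{eq::bound_on_solu_1} then follows by unrolling this linear recursion with the geometric-series identity $\sum_{i=0}^{k-1}\rho^i=(1-\rho^k)/(1-\rho)$ together with $1-\rho=2\kappa\alpha m$, so that the stationary part of the bound carries the factor $1/(1-\rho)$ and the transient part carries $\rho^k V_0=(1-2\kappa\alpha m)^k\big(f(\vect{x}_0,t_0)-f^\star(t_0)\big)$.

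First I would analyze the \emph{update step}, which is a gradient-descent step on the frozen cost $g:=f(\cdot,t_{k+1})$ taken from the predicted point $\vect{x}^-_{k+1}$, namely $\vect{x}_{k+1}=\vect{x}^-_{k+1}-\alpha\nabla_{\vect{x}}g(\vect{x}^-_{k+1})$. Applying the quadratic upper bound in \eqref{eq::convex_lipsc} to this step gives $g(\vect{x}_{k+1})\le g(\vect{x}^-_{k+1})-\alpha\big(1-\tfrac{\alpha M}{2}\big)\|\nabla_{\vect{x}}g(\vect{x}^-_{k+1})\|^2=g(\vect{x}^-_{k+1})-\alpha\kappa\|\nabla_{\vect{x}}g(\vect{x}^-_{k+1})\|^2$, where the hypothesis $0<\alpha\le\tfrac{1}{2M}$ guarantees $\kappa=1-\tfrac{\alpha M}{2}>0$. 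Invoking the gradient-domination lower bound in \eqref{eq::dist_optim_cost}, i.e. $\|\nabla_{\vect{x}}g(\vect{x}^-_{k+1})\|^2\ge 2m\,(g(\vect{x}^-_{k+1})-f^\star(t_{k+1}))$, turns this into the contraction $g(\vect{x}_{k+1})-f^\star(t_{k+1})\le \rho\,(g(\vect{x}^-_{k+1})-f^\star(t_{k+1}))$ with $\rho=1-2\kappa\alpha m$; since $M\ge m$ and $\kappa\le1$, the step-size bound also keeps $\rho\in[0,1)$.

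Next I would bound the predicted gap in terms of $V_k$ by writing $g(\vect{x}^-_{k+1})-f^\star(t_{k+1})=\big[f(\vect{x}^-_{k+1},t_{k+1})-f(\vect{x}_k,t_k)\big]+V_k+\big[f^\star(t_k)-f^\star(t_{k+1})\big]$. The last bracket is at most $\psi$ by Lemma~\ref{bound_traj_opt}. The first bracket is the cost change induced by the prediction step, which I would control by a second-order Taylor expansion in $(\vect{x},t)$ about $(\vect{x}_k,t_k)$, splitting along the two regimes of the prediction rule. When $\|\nabla_{\vect{x}}f(\vect{x}_k,t_k)\|<\eps$ the state is frozen, $\vect{x}^-_{k+1}=\vect{x}_k$, and only the temporal terms $\nabla_t f\,\delta+\tfrac12\nabla_{tt}f\,\delta^2$ survive, giving the $\mu\delta$ bound with $\mu=K_1+\tfrac{\delta}{2}K_3$. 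When $\|\nabla_{\vect{x}}f(\vect{x}_k,t_k)\|\ge\eps$ the prediction rule enforces $\nabla_{\vect{x}}f(\vect{x}_k,t_k)^\top(\vect{x}^-_{k+1}-\vect{x}_k)=-\delta\,|\nabla_t f(\vect{x}_k,t_k)|$ and the displacement satisfies $\|\vect{x}^-_{k+1}-\vect{x}_k\|\le \delta K_1/\eps$; bounding the first-order pair in absolute value by $2K_1\delta$, and the Hessian, mixed $\nabla_{\vect{x}t}$, and $\nabla_{tt}$ terms using Assumptions~\ref{asm:str_convexity} and~\ref{asm:bound_dfstar}, reproduces exactly $\gamma\delta^2$ with $\gamma=\tfrac{2K_1}{\delta}+\tfrac{M}{2\eps^2}K_1^2+\tfrac12K_3+\tfrac1\eps K_1K_2$. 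Hence the prediction bracket is at most $\max(\gamma\delta^2,\mu\delta)$, and combining with the update-step contraction yields a recursion of the form $V_{k+1}\le\rho\,V_k+(\text{perturbation})$.

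The hard part will be the precise bookkeeping that produces the exact coefficients $\tfrac{1}{4\kappa^2\alpha^2 m}$ and $\tfrac{1}{4\kappa^2\alpha^2 m^2}$ in \eqref{eq::bound_on_solu_1}. One must track how the two perturbations $\psi$ and $\max(\gamma\delta^2,\mu\delta)$ are rescaled as they pass through the descent/gradient-domination step: a Young-type split of the gradient inner product, together with the factor $1-\rho=2\kappa\alpha m$ coming from the geometric sum, is what converts the raw per-step perturbation $\tfrac{\psi}{2\kappa\alpha}+\tfrac{\max(\gamma\delta^2,\mu\delta)}{2\kappa\alpha m}$ into the stated stationary coefficients and explains the extra $1/m$ attached to the prediction error. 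The two-regime case split and the cross-terms of the two-variable Taylor expansion are the most delicate algebra; once the per-step constants are pinned down, the remainder is a routine unrolling of the scalar linear recursion.
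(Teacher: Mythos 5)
Your proposal follows essentially the same route as the paper's proof: the same contraction factor $\rho=1-2\kappa\alpha m$ obtained from the quadratic upper bound \eqref{eq::convex_lipsc} followed by gradient domination \eqref{eq::dist_optim_cost}, the same three-term decomposition of $f(\vect{x}^-_{k+1},t_{k+1})-f^\star(t_{k+1})$ using Lemma~\ref{bound_traj_opt} for the drift term $\psi$ and the same two-regime Taylor analysis of the prediction step yielding $\max(\gamma\delta^2,\mu\delta)$ (with identical constants), followed by unrolling the scalar recursion. The one point where you diverge---invoking a ``Young-type split of the gradient inner product'' to explain the coefficients $\tfrac{1}{4\kappa^2\alpha^2 m}$ and $\tfrac{1}{4\kappa^2\alpha^2 m^2}$---is not what the paper does: it simply derives $V_{k+1}\leq(1-2\kappa\alpha m)\big(V_k+\psi+\max(\gamma\delta^2,\mu\delta)\big)$ and passes to \eqref{eq::bound_on_solu_1} directly (these coefficients are $m/(1-\rho)^2$ and $1/(1-\rho)^2$, i.e.\ a loosened geometric-series constant), so no additional idea is required at that step.
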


\begin{algorithm}[t]
\caption{$\eps-$exact Time-Varying Optimization with $\nabla_t  f(\vect{x}_k,t_k)$}\label{Alg::1}
\begin{algorithmic}[1]
\State {$\mathbf{\textbf{Initialization}}\,\, \vect{x}_0\in\real^n,\,\,\, \eps, \delta\in\real_{>0}, f(\vect{x}_0,t_0)\in\real $, }
\If{$\|\nabla_{\vect{x}}f(\vect{x}_k,t_k)\|\geq \eps$}
\State {$\vect{x}^-_{k+1}=\vect{x}_{k}-\delta\frac{ |\nabla_t f(\vect{x}_k,t_{k})|}{\|\nabla_{\vect{x}}f(\vect{x}_k,t_k)\|^2}\nabla_{\vect{x}}f(\vect{x}_k,t_k)$ }
\Else
\State{\textbf{Set} $\vect{x}^-_{k+1}=\vect{x}_{k}$}
\EndIf 
\State {\textbf{Update} $f(\vect{x}^-_{k+1},t_{k+1})$}
\State {$\vect{x}_{k+1}=\vect{x}^{-}_{k+1}-\alpha\nabla_{\vect{x}}f(\vect{x}^{-}_{k+1},t_{k+1})$}. 
\end{algorithmic}
\label{Alg::1}
\end{algorithm} 

\begin{proof}
Invoking the Taylor series expansion~\cite{DPB:99}, the prediction step of Algorithm~\ref{Alg::1}  gives 
\begin{align}\label{eq::Taylor}
 &f(\vect{x}^-_{k+1},t_{k+1})= f(\vect{x}_{k},t_{k})+\nabla_{\vect{x}} f(\vect{x}_{k},t_k)^\top(\vect{x}^-_{k+1}-\vect{x}_{k})\nonumber\\&+\nabla_{t} f(\vect{x}_{k},t_k)(t_{k+1}-t_k)+\frac{(t_{k+1}-t_k)^2}{2}\nabla_{tt} f(\zeta,\theta)+\nonumber\\&+\frac{1}{2}(\vect{x}^-_{k+1}-\vect{x}_{k})^\top \nabla_{\vect{xx}}f(\zeta,\theta)(\vect{x}^-_{k+1}-\vect{x}_{k})+\nonumber\\&\quad\nabla_{\vect{x}t} f(\zeta,\theta)^\top(\vect{x}^-_{k+1}-\vect{x}_{k})(t_{k+1}-t_k),
\end{align}
where $\zeta\in[\vect{x}_k,\vect{x}_{k+1})$ component-wise and $\theta\in[t_k,t_{k+1})$.
If $\|\nabla_{\vect{x}}f(\vect{x}_k,t_k)\|< \eps$, we have $\vect{x}_{k+1}^-=\vect{x}_k$. Then, given Assumption~\ref{asm:bound_dfstar} we obtain
\begin{align}\label{eq::grad_less_eps}
|f(\vect{x}^-_{k+1},t_{k+1})- f(\vect{x}_{k},t_{k})|\leq \mu\,\delta,\end{align}
where $\mu$ is given in the statement. If $\|\nabla_{\vect{x}}f(\vect{x}_k,t_k)\|\geq \eps$, substituting for $\vect{x}_{k+1}^-$ from Algorithm~\ref{Alg::1} (step $3$) gives
\begin{align*}
&f(\vect{x}^-_{k+1},t_{k+1})- f(\vect{x}_{k},t_{k})=\frac{\delta^2}{2}\nabla_{tt} f(\vect{\zeta},\theta)+ \\
&\delta\,(\nabla_t f(\vect{x}_k,t_k)-|\nabla_t f(\vect{x}_k,t_k)|)+\\
&\frac{\delta^2\nabla_t f(\vect{x}_{k},t_k)^2}{2\|\nabla_{\vect{x}} f(\vect{x}_{k},t_k)\|^4}
\nabla_{\vect{x}} f(\vect{x}_{k},t_k)^\top \nabla_{\vect{xx}}f(\vect{\zeta},\theta)\nabla_{\vect{x}} f(\vect{x}_{k},t_k)\\&-\frac{\delta^2\nabla_{t} f(\vect{x}_{k},t_k)}{\| \nabla_{\vect{x}} f(\vect{x}_{k},t_k)\|^2}  \nabla_{t\vect{x}}f (\vect{\zeta},\theta)^\top \nabla_{\vect{x}} f(\vect{x}_{k},t_k),
\end{align*}
given Assumption~\ref{asm:bound_dfstar} and applying Cauchy–Schwarz inequality results in 
\begin{align*}
    &|f(\vect{x}^-_{k+1},t_{k+1})- f(\vect{x}_{k},t_{k})| \leq \frac{\delta^2}{2}K_3+ 2\delta K_1 \\
&+\frac{\delta^2 K_1^2 M }{2\|\nabla_{\vect{x}} f(\vect{x}_{k},t_k)\|^2}
+\frac{\delta^2 K_1 K_2}{\| \nabla_{\vect{x}} f(\vect{x}_{k},t_k)\|},
\end{align*}


 \begin{figure}[t]
  \centering
    \includegraphics[scale=0.4]{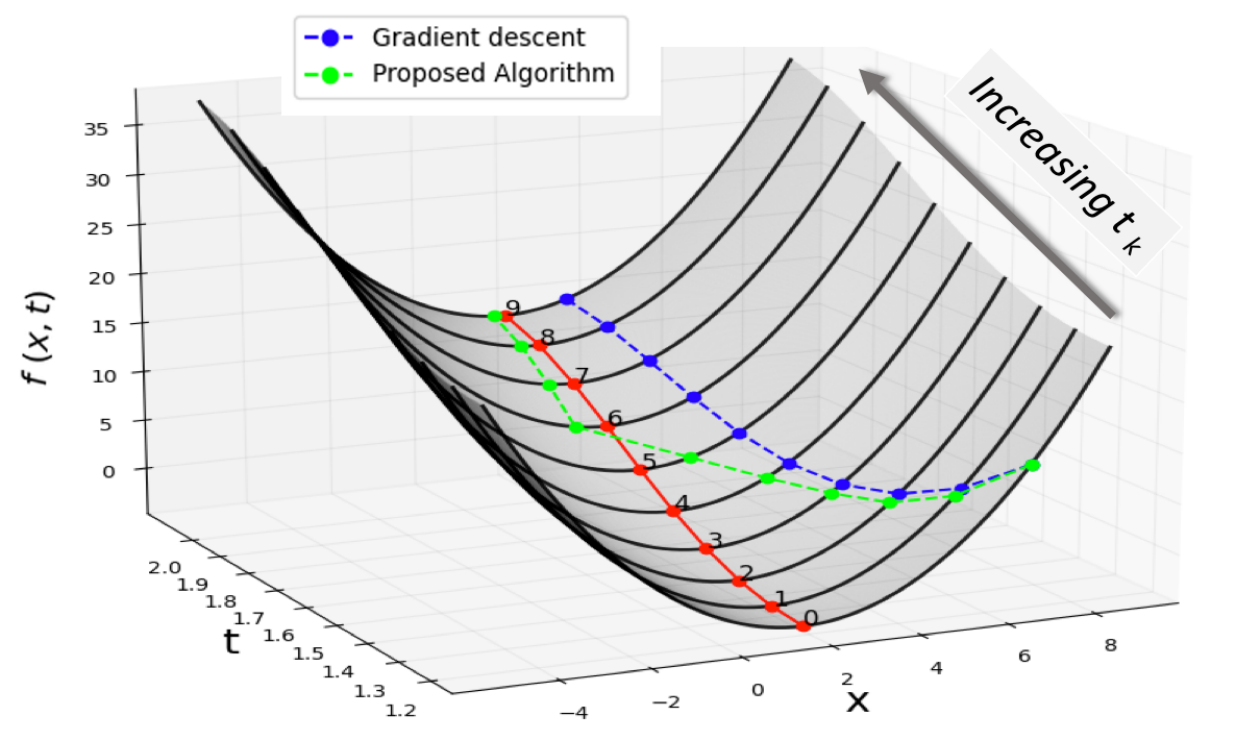}
      \caption{\small{The difference between the trajectories of the proposed Algorithm 1 versus the gradient descent algorithm shown in time interval $t\in[1.2, 2.1]$ when the cost is the time-varying cost shown in Fig.~\ref{fig:illustration_page1}.} }\label{fig:illustration}
\end{figure}

since $1/\|\nabla_{\vect{x}}f(\vect{x}_k,t_k)\|\leq 1/\eps$ results in $|f(\vect{x}^-_{k+1},t_{k+1})- f(\vect{x}_{k},t_{k})|\leq \gamma\delta^2$.
Therefore, taking into account~\eqref{eq::grad_less_eps}, we can conclude that the prediction steps $2$ to $5$ of Algorithm~\ref{Alg::1} lead to
\begin{align} \label{eq::bound_alg1}
|f(\vect{x}^-_{k+1},t_{k+1})- f(\vect{x}_{k},t_{k})|\leq \max(\gamma\delta^2,\mu\delta)
\end{align}

where $\gamma$ is as given in the statement. For the update step, given Assumption~\ref{asm:str_convexity} in light of~\eqref{eq::convex_lipsc}, we have 
\begin{align*}
&\nabla_{\vect{x}}  f(\vect{x}^-_{k+1},t_{k+1})^\top(\vect{x}_{k+1}-\vect{x}^-_{k+1})+\frac{m}{2}\|\vect{x}_{k+1}-\vect{x}^-_{k+1}\|^2\\&\leq f(\vect{x}_{k+1},t_{k+1})-f(\vect{x}^-_{k+1},t_{k+1})\leq\\&\nabla_{\vect{x}}  f(\vect{x}^-_{k+1},t_{k+1})^\top(\vect{x}_{k+1}-\vect{x}^-_{k+1})+\frac{M}{2}\|\vect{x}_{k+1}-\vect{x}^-_{k+1}\|^2,
\end{align*}
 which along with update step $7$ of the Algorithm~\ref{Alg::1}~gives
\begin{align*}
  &-\alpha (1-\frac{m}{2}\alpha)\|\nabla_{\vect{x}}  f(\vect{x}^-_{k+1},t_{k+1})\|^2\leq f(\vect{x}_{k+1},t_{k+1})-\\
&\quad f(\vect{x}^-_{k+1},t_{k+1})\leq -\alpha (1-\frac{M}{2}\alpha)\|\nabla_{\vect{x}}   f(\vect{x}^-_{k+1},t_{k+1})\|^2.
\end{align*}
Next, note that given Assumption~\ref{asm:str_convexity} and in light of~\eqref{eq::dist_optim_cost}, we have 
 \begin{align*}
  \!\! -2M (&f(\vect{x}^-_{k+1},t_{k+1})-f^\star(t_{k+1}))\!\leq\!\!-\|\nabla_{\vect{x}}  f(\vect{x}^-_{k+1},t_{k+1})\|^2\\&  \!\leq -2m (f(\vect{x}^-_{k+1},t_{k+1})-f^\star(t_{k+1})).
\end{align*}
Let us write $ f(\vect{x}_{k+1},t_{k+1})-f^\star(t_{k+1})
   \!=\!f(\vect{x}_{k+1},t_{k+1})-f(\vect{x}^-_{k+1},t_{k+1})+(f(\vect{x}^-_{k+1},t_{k+1})-f^\star(t_{k+1})),$
which together with the bounds we already derived for the update step leads to
\begin{align}\label{eq::bound_int}
   & f(\vect{x}_{k+1},t_{k+1})\!-\!f^\star(t_{k+1})\leq\nonumber\\
    &\quad \quad\quad  (1-2\kappa  \alpha m)\,(f(\vect{x}^-_{k+1},t_{k+1})-f^\star(t_{k+1})),
\end{align}
where $\kappa=(1-\alpha M/2)$. Here, given that $0<m\leq M$, we used the fact that for $0<\alpha\leq \frac{1}{2M}$ we have $0<2\alpha m(1-\alpha M/2)<2\alpha M(1-\alpha m/2)\leq 1$.
 On the other hand, by taking into account~\eqref{eq::bound_alg1} and \eqref{eq::bound_optimal_trajectory} we can write
\begin{align*}
&f(\vect{x}^-_{k+1},t_{k+1})-f^*(t_{k+1})=f(\vect{x}^-_{k+1},t_{k+1})-f(\vect{x}_{k},t_{k})\\&+f(\vect{x}_{k},t_{k})-f^*(t_{k})+f^*(t_{k})-f^*(t_{k+1})\leq \\
&\max(\gamma\delta^2,\mu\delta)+f(\vect{x}_{k},t_{k})-f^*(t_{k})+ \psi.
\end{align*}
Then, from~\eqref{eq::bound_int} we can obtain  
 \begin{align*}
&f(\vect{x}_{k+1},t_{k+1})-f^*(t_{k+1}) \leq(1-2\kappa \alpha m )\max(\gamma\delta^2,\mu\delta)
\\&+
(1-2\kappa \alpha m )(f(\vect{x}_{k},t_{k})-f^*(t_{k}))+(1-2\kappa \alpha m )\psi.
\end{align*}
Subsequently, since $0<1-2\kappa \alpha m\leq 1$ we can obtain~\eqref{eq::bound_on_solu_1}, completing the proof.
\end{proof}

\begin{rem}[Ultimate tracking bound of Algorithm 1]\label{rem::Algorithm1}
{
\rm
The tracking bound of Algorithm~1 is given by~\eqref{eq::bound_on_solu_1}.  As $k\to\infty$ we have $(1-2\kappa\alpha m)^k\to0$ in~\eqref{eq::bound_on_solu_1}. 
Thus, the effect of initialization error $f(\vect{x}_{0},t_{0})-f^*(t_{0})$ vanishing with time. Moreover, the ultimate bound on $f(\vect{x}_{k+1},t_{k+1})-f^*(t_{k+1})$ as $k\to\infty$ is $\frac{\psi}{4\kappa^2\alpha^2 m }+\frac{\max(\gamma\delta^2,\mu\delta)}{4\kappa^2\alpha^2 m^2}$. Thus the optimal value of $\eps$ corresponding to the lowest bound in~\eqref{eq::bound_on_solu_1} is obtained as the solution of  $\gamma\delta^2=\mu\delta$,
which can be calculated numerically.
}

\end{rem}
Algorithm~\ref{Alg::1} requires explicit knowledge of $\nabla_{t}f(\vect{x}_{k},t_k)$ which may not be available for costs constructed from streaming data. For such problems we propose Algorithm~\ref{Alg::2}, which follows the same prediction-correction structure of Algorithm~\ref{Alg::2} but uses an approximation for $\nabla_t f(\vect{x}_k,t_k)$. In Algorithm~\ref{Alg::2}~
\begin{align}
 \nabla_{t} f(\vect{x}_{k},t_k)\approx\frac{f(\vect{x}_{k},t_k)-f(\vect{x}_{k},t_{k-1})}{t_k-t_{k-1}}.
\end{align}
Higher-order differences can also be used to construct a better approximation of $f(\vect{x}_{k},t_k)$, but at the expense of higher computation and storage costs.
In both Algorithm~\ref{Alg::1} and Algorithm~\ref{Alg::2}, $\eps\in\real_{> 0}$ is an arbitrary chosen parameter that, as we show below, can be tuned to achieve a desired level of tracking accuracy. Next, theorem, explains the convergence guarantee of Algorithm~\ref{Alg::2}.

\begin{algorithm}[t]
\caption{$\eps-$exact Time-Varying Optimization with $\nabla_t f(\vect{x}_k,t_k)$ Approximation }
{
\begin{algorithmic}[1]
\State {$\mathbf{\textbf{Initialization}}\,\, \vect{x}_0\in\real^n,\,\,\, \eps\in\real_{>0}, f(\vect{x}_0,t_0)\in\real$,} 
\If{$\|\nabla_{\vect{x}}f(\vect{x}_k,t_k)\|\geq \eps$}
\State $\vect{x}^-_{k+1}=\vect{x}_{k}-\frac{ |f(\vect{x}_k,t_{k})-f(\vect{x}_k,t_{k-1})|}{\|\nabla_{\vect{x}}f(\vect{x}_k,t_k)\|^2}\nabla_{\vect{x}}f(\vect{x}_k,t_k)$
\Else
\State{ \textbf{Set} $\vect{x}^-_{k+1}=\vect{x}_{k}$}
\EndIf
\State \textbf{Update} $f(\vect{x}^-_{k+1},t_{k+1})$
\State $\vect{x}_{k+1}=\vect{x}^{-}_{k+1}-\alpha\nabla_{\vect{x}}f(\vect{x}^{-}_{k+1},t_{k+1})$ 
.
\end{algorithmic}}
\label{Alg::2}
\end{algorithm}

\vspace{0.1in}
\begin{thm}[Convergence analysis of the algorithm~\ref{Alg::2}]\label{thm::main2}
{\rm 
Let 
Assumptions~\ref{asm:str_convexity} and~\ref{asm:bound_dfstar}  hold. Then, the Algorithm~\ref{Alg::2} converges to the neighborhood of the optimum solution of~\ref{eq::opt} with the following upper bound
\begin{align}\label{eq::bound_on_solu_2}
&\!\!\!\!f(\vect{x}_{k+1},t_{k+1})-f^*(t_{k+1}) \!\leq\!\frac{\big(1-(1-2\kappa\alpha m)^k\big)}{4\kappa^2\alpha^2 m}\psi\nonumber\\
&\qquad\quad\qquad
+\frac{\big(1-(1-2\kappa\alpha m)^k\big)}{4\kappa^2\alpha^2 m^2}\max(\gamma'\delta^2,
\mu\delta)\nonumber\\
&\qquad\qquad\quad+
(1-2\kappa\alpha m)^k(f(\vect{x}_{0},t_{0})-f^\star(t_{0})),
\end{align}
where $\psi = \delta (K_1 + \frac{\delta}{2}K_3) + \frac{K^2_2 \delta^2}{2m} (\frac{M\delta}{m} + 2)$, $\gamma'=K_3 + \frac{2K_1}{\delta} + \frac{K_1^2 M}{\epsilon^2} + \frac{K_2(K_1 + \frac{\delta}{2} K_3)}{\epsilon} + \frac{\delta^2 K_3^2 M}{4\epsilon^2}$, 
$\mu=K_1+\frac{\delta}{2}K_3$ and 
$\kappa=(1-\alpha M/2)$,
provided that $0<\alpha\leq \frac{1}{2M}$.
}
\end{thm}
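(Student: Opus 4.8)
The plan is to mirror the proof of Theorem~\ref{thm::main1} almost verbatim, isolating the single place where Algorithm~\ref{Alg::2} differs from Algorithm~\ref{Alg::1}: in the prediction step the exact quantity $\delta\,|\nabla_t f(\vect{x}_k,t_k)|$ is replaced by the backward finite difference $|g_k|$, where $g_k:=f(\vect{x}_k,t_k)-f(\vect{x}_k,t_{k-1})$. Everything downstream of the prediction-step bound is structurally identical, so the strategy is to re-derive only the analogue of~\eqref{eq::bound_alg1}, now with $\gamma'$ in place of $\gamma$, and then reuse the update-step contraction and the telescoping argument without change.

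First I would Taylor-expand $f(\vect{x}^-_{k+1},t_{k+1})$ about $(\vect{x}_k,t_k)$ exactly as in~\eqref{eq::Taylor}. In the case $\|\nabla_{\vect{x}}f(\vect{x}_k,t_k)\|<\eps$ the prediction step sets $\vect{x}^-_{k+1}=\vect{x}_k$, so the bound $|f(\vect{x}^-_{k+1},t_{k+1})-f(\vect{x}_k,t_k)|\le\mu\delta$ from~\eqref{eq::grad_less_eps} carries over unchanged. In the case $\|\nabla_{\vect{x}}f(\vect{x}_k,t_k)\|\ge\eps$ I would substitute the step-$3$ prediction of Algorithm~\ref{Alg::2}, so that the linear gradient term collapses to $-|g_k|$. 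The new ingredient is a second-order Taylor expansion of $f(\vect{x}_k,t_{k-1})$ about $t_k$, yielding $g_k=\delta\,\nabla_t f(\vect{x}_k,t_k)-\tfrac{\delta^2}{2}\nabla_{tt}f(\vect{x}_k,\xi)$ for some $\xi\in[t_{k-1},t_k]$; this relates $-|g_k|+\delta\,\nabla_t f(\vect{x}_k,t_k)$ to the exact-derivative expression of Theorem~\ref{thm::main1} up to an $O(\delta^2)$ remainder controlled by $K_3$ (treating the two sign cases $g_k\gtrless 0$ separately).

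Then I would bound the remaining Taylor terms via Assumption~\ref{asm:bound_dfstar}, Cauchy--Schwarz, and $\|\nabla_{\vect{x}}f(\vect{x}_k,t_k)\|\ge\eps$: the curvature term contributes $\tfrac{M|g_k|^2}{2\|\nabla_{\vect{x}}f\|^2}$, the mixed term contributes $\tfrac{\delta K_2|g_k|}{\|\nabla_{\vect{x}}f\|}$, and the pure-time remainder contributes $\tfrac{\delta^2}{2}K_3$. Using $|g_k|\le\delta(K_1+\tfrac{\delta}{2}K_3)=\delta\mu$ together with the split $|g_k|^2\le 2\delta^2K_1^2+\tfrac{\delta^4}{2}K_3^2$, obtained by applying $(a+b)^2\le 2a^2+2b^2$ to the two terms of $g_k$, and dividing through by $\delta^2$, produces exactly the constant $\gamma'$ in the statement. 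This gives $|f(\vect{x}^-_{k+1},t_{k+1})-f(\vect{x}_k,t_k)|\le\max(\gamma'\delta^2,\mu\delta)$.

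Finally, the update step $\vect{x}_{k+1}=\vect{x}^-_{k+1}-\alpha\nabla_{\vect{x}}f(\vect{x}^-_{k+1},t_{k+1})$ acts on the frozen cost $f(\cdot,t_{k+1})$, which is unchanged from Algorithm~\ref{Alg::1}; hence the strong-convexity/Lipschitz sandwich and the per-step contraction factor $(1-2\kappa\alpha m)$ of~\eqref{eq::bound_int} transfer verbatim, and combining them with Lemma~\ref{bound_traj_opt} and telescoping the recursion yields~\eqref{eq::bound_on_solu_2}. I expect the main obstacle to be the bookkeeping that reproduces the exact form of $\gamma'$: controlling the sign-dependent term $-|g_k|+\delta\nabla_t f$ in both cases, and choosing the $(a+b)^2\le 2a^2+2b^2$ split so that the $K_1^2$ and $K_3^2$ coefficients land on $\tfrac{K_1^2M}{\eps^2}$ and $\tfrac{\delta^2K_3^2M}{4\eps^2}$ without leaving a spurious $K_1K_3$ cross term.
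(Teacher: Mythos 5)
Your proposal is correct and follows essentially the same route as the paper's own proof: the same Taylor expansion of $f(\vect{x}^-_{k+1},t_{k+1})$ about $(\vect{x}_k,t_k)$, the same second-order expansion $g_k=\delta\,\nabla_t f(\vect{x}_k,t_k)-\tfrac{\delta^2}{2}\nabla_{tt}f(\vect{x}_k,\nu)$ relating the finite difference to the exact time derivative, the same Cauchy--Schwarz and $(a+b)^2\le 2a^2+2b^2$ bookkeeping yielding $\max(\gamma'\delta^2,\mu\delta)$, and the same reuse of the contraction inequality~\eqref{eq::bound_int}, Lemma~\ref{bound_traj_opt}, and telescoping. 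The constants you derive ($K_1^2M/\eps^2$, $\delta^2K_3^2M/4\eps^2$, and the $K_2(K_1+\tfrac{\delta}{2}K_3)/\eps$ cross term) match the paper's $\gamma'$ exactly.
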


\begin{proof}
First note that if $\|\nabla_{\vect{x}}f(\vect{x}_k,t_k)\|< \eps$ Algorithm~\ref{Alg::2} also results in~\eqref{eq::grad_less_eps}. If $\|\nabla_{\vect{x}}f(\vect{x}_k,t_k)\|\geq  \eps$ we proceed as follows. Note that
\begin{align}\label{eq::taylor_proof}
    &f(\vect{x}_k,t_{k-1})=f(\vect{x}_k,t_{k})-\nabla_t f(\vect{x}_k,t_k)(t_{k}-t_{k-1})\nonumber\\&+\frac{1}{2}\nabla_{tt} f(\vect{x}_k,\nu)(t_{k}-t_{k-1})^2,~\nu\in[t_{k-1},t_k).
\end{align}
 Considering~\eqref{eq::taylor_proof}, if $\|\nabla_{\vect{x}}f(\vect{x}_k,t_k)\|\geq  \eps$, the prediction step of Algorithm~\ref{Alg::2} (step 3) results in
\begin{align*}
 &f(\vect{x}^-_{k+1},t_{k+1})- f(\vect{x}_{k},t_{k})=\frac{\delta^2}{2}\nabla_{tt} f(\vect{\zeta},\theta) + \delta \nabla_{t}f(\vect{x}_{k},t_k)\\
&- \delta \big|\nabla_{t}f(\vect{x}_{k},t_k) - \frac{\delta}{2}\nabla_{tt} f(\vect{x}_{k},\nu))\big|\\
& +\frac{\delta^2 \big|\nabla_{t}f(\vect{x}_{k},t_k) - \frac{\delta}{2}\nabla_{tt} f(\vect{x}_{k},\nu))\big|^2}{2\|\nabla_{\vect{x}} f(\vect{x}_{k},t_k)\|^4}\\
&\nabla_{\vect{x}} f(\vect{x}_{k},t_k)^\top \nabla_{\vect{xx}}f(\vect{\zeta},\theta)\nabla_{\vect{x}} f(\vect{x}_{k},t_k)\\
&-\frac{\delta^2 |\nabla_t f(\vect{x}_{k},t_k)-\frac{\delta}{2}\nabla_{tt} f(\vect{x}_{k},\nu)|}{\| \nabla_{\vect{x}}f(\vect{x}_{k},t_k)\|^2}  \nabla_{t\vect{x}} f (\vect{\zeta},\theta)^\top \nabla_{\vect{x}} f(\vect{x}_{k},t_k),
\end{align*}
which given Assumption~\ref{asm:bound_dfstar} and applying Cauchy–Schwarz inequality and using the fact that $|a+b|^2 \leq 2|a|^2 + 2|b|^2$, $\forall$ $a,b \in \real$ we have 
\begin{align*}
&|f(\vect{x}^-_{k+1},t_{k+1})- f(\vect{x}_{k},t_{k})| \leq \delta^2K_3 + 2\delta K_1\\
& +\frac{\delta^2 K_1^2 M}{\|\nabla_{\vect{x}} f(\vect{x}_{k},t_k)\|^2} +\frac{\delta^2 K_2 (K_1 + \frac{\delta}{2}K_3)}{\| \nabla_{\vect{x}}f(\vect{x}_{k},t_k)\|} \\
&+\frac{\delta^4 K_3^2 M}{4\|\nabla_{\vect{x}} f(\vect{x}_{k},t_k)\|^2},
\end{align*}



because $1/\|\nabla_{\vect{x}}f(\vect{x}_k,t_k)\|\leq 1/\eps$ results in $|f(\vect{x}^-_{k+1},t_{k+1})- f(\vect{x}_{k},t_{k})|\leq \gamma'\delta^2$.
Therefore, 
~\ref{asm:bound_dfstar}, taking into account~\eqref{eq::grad_less_eps}, we can conclude that the prediction steps $2$ to $5$ of Algorithm~\ref{Alg::2} lead to
\begin{align}\label{eq::bound_alg11}
|f(\vect{x}^-_{k+1},t_{k+1})- f(\vect{x}_{k},t_{k})|\leq \max(\gamma'\delta^2,\mu\delta),
\end{align}
where $\gamma'$ is given in the statement. For the update step, we follow the similar approach as the proof Algorithm~\ref{Alg::1} to arrive as same inequality relation~\eqref{eq::bound_int}.
On the other hand, we have $f(\vect{x}^-_{k+1},t_{k+1})-f^*(t_{k+1})=f(\vect{x}^-_{k+1},t_{k+1})-f(\vect{x}_{k},t_{k})+f(\vect{x}_{k},t_{k})-f^*(t_{k})+f^*(t_{k})-f^*(t_{k+1})$, which along with 
invoking~\eqref{eq::bound_alg11}, \eqref{eq::bound_optimal_trajectory} and~\eqref{eq::bound_int} results in
\begin{align*}
&f(\vect{x}_{k+1},t_{k+1})-f^*(t_{k+1})\leq(1-2\kappa\alpha m)\max(\gamma'\delta^2,\mu\delta)
\\&+
(1-2\kappa\alpha m)(f(\vect{x}_{k},t_{k})-f^*(t_{k}))+\\&(1-2\kappa\alpha m)\psi,
\end{align*}
and consequently~\eqref{eq::bound_on_solu_2},
which concludes our proof.
\end{proof}

Based on Theorem~\ref{thm::main2}, a similar statement to Remark~\ref{rem::Algorithm1}
can be made about the ultimate tracking bound of Algorithm~2. Notice also that the tracking error of Algorithm~\ref{Alg::2}, as one can expect based one Algorithm~\ref{Alg::2}'s use of an estimate for $\nabla_{t} f(\vect{x}_{k},t_k)$, can be larger than Algorithm~1's because $\gamma'\geq \gamma$.

\section{Numerical Example}\label{sec::Num_ex}
In this section, we demonstrate the performance of Algorithm \ref{Alg::1} and Algorithm \ref{Alg::2} using three different examples. In the first example we assume the time derivative of cost are explicitly available. In the next two examples, which include solving a Model Predictive Control (MPC) as a  time-varying convex optimization problem and a learning problem with streaming date, the explicit knowledge about the time derivative of the cost is not available so turn to solving these problems using Algorithm~\ref{Alg::2}. 

 \begin{figure}[t]
  \centering
    \includegraphics[scale=0.4]{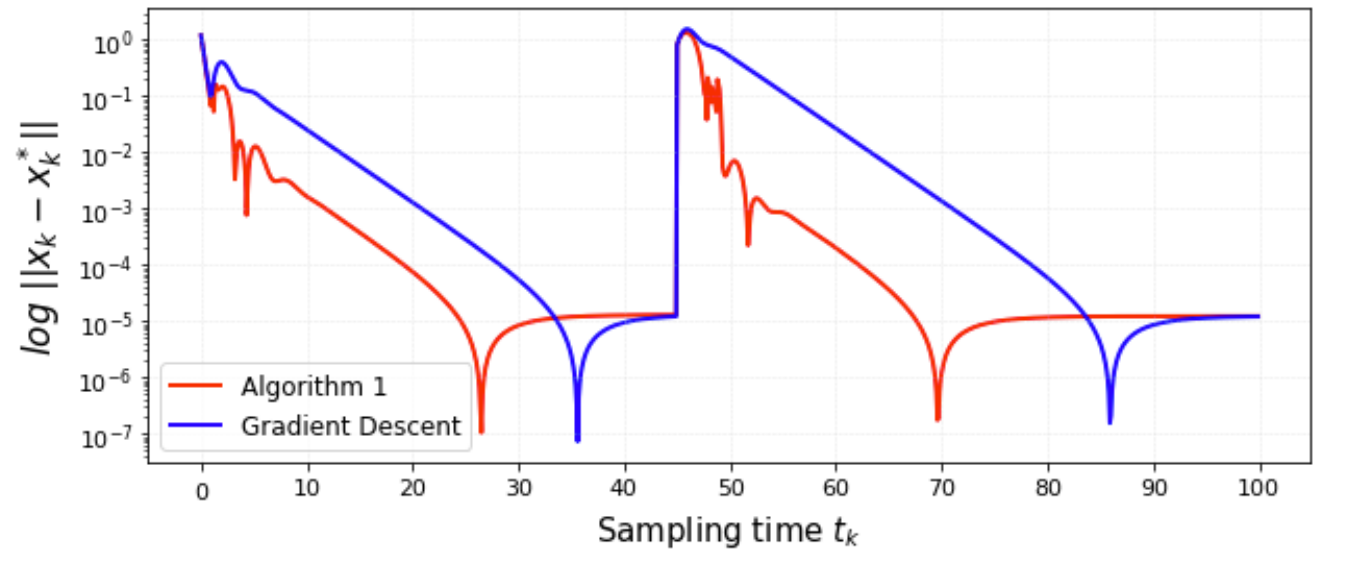}
      \caption{log error of the performance of Algorithm \ref{Alg::1} versus gradient descent algorithm with respect to the sampling time $t_k$.} \label{num1_1}
\end{figure}
\subsection{A case of time-varying cost function whose time derivatives are available explicitly}
Consider a time-varying cost function, i.e., we have 
\begin{align}
&f(\vect{x}, t)= (x_1+x_2-0.01)^2 + (1+e^{-(t - \tau)})x_2^2 \nonumber \\
 &+ e^{-(t - \tau)} x_1 \cdot \sin (2t), \nonumber \\
&\text{where} \quad \tau = 
\left\{
    \begin{array}{lr}
    0, & \text{if } t < 45\\
    45, & \text{if } t \geq 45
    \end{array}
\right\} ~~ \text{and}~ \vect{x} = [x_1, x_2]^\top.
\end{align}
Here, the purpose of the variable $\tau$ is to produce a jump within the function at the time instant $t = 45$ to observer the response of the algorithm in relation to abrupt changes in the underlying problem. Moreover, we can easily establish the optimal point of the above time-varying cost function as time goes to infinity is $\vect{x^\star} = \vect{x}_t^* = [2, -1]^\top$ as $t \to \infty$, i.e., $\vect{x^\star} = \arg\underset{{\vect{x}\in
    \reals^2}}{\min} 
  \,\,\underset{t\to \infty}{\lim} f(\vect{x},t)$. 
  
  As depicted in Figure \ref{num1_1}, initializing both gradient descent and Algorithm \ref{Alg::1} from $\vect{x}_0 = [0.1, 1.2]^\top$, selecting $\alpha = 0.04$, $\delta = 0.1$ and $\eps = 0.03$, for $t < 45$, Algorithm \ref{Alg::1} demonstrates the ability to attain an error level of $10^{-3}$ at $t_k = 10.7$, whereas the gradient descent algorithm achieves the same error threshold at $t_k =24.7$. In other words, gradient descent algorithm requires $140$ more iterations to reach to the error level of $10^{-3}$. The simulation result showcases Algorithm \ref{Alg::1} attains a level of precision with tracking error of $\epsilon$ faster than gradient descent algorithm. Furthermore, for $t \geq 45$, after inducing a jump within the function at $t = 45$, Algorithm \ref{Alg::1} demonstrates the ability to attain an error level of $10^{-3}$ at $t_k = 54$, whereas the gradient descent algorithm achieves the same error threshold at $t_k =71$. In other words, gradient descent algorithm requires $170$ more iterations to reach to the error level of $10^{-3}$. The simulation results vividly indicate that after reaching the error threshold $\epsilon$, Algorithm \ref{Alg::1} behaves similarly to the gradient descent algorithm until both algorithms converge to the steady-state error bound. 
  


 \begin{figure}[t]
  \centering
    \includegraphics[scale=0.36]{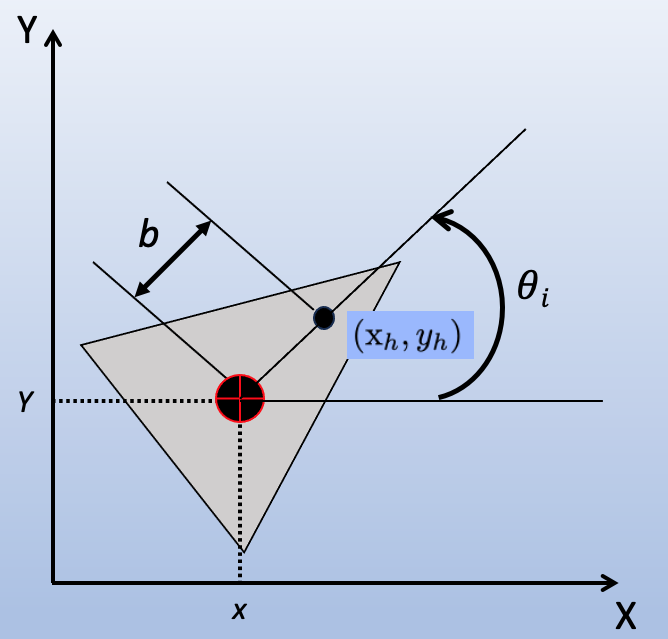}
      \caption{A unicycle robot and its state variables }
      \label{fig:unicycle}
\end{figure}
\subsection{Model Predictive Control}
 MPC consists of solving repeated optimization problems over some fixed moving horizon. These repeated optimization problems can be viewed as an incidence of an optimization problem with streaming/time-varying cost. Let us demonstrate through designing an MPC controller for a unicycle robot, see Fig.~\ref{fig:unicycle}. The objective is to have point $(x_h,y_h)$, which can be for example a camera position on the robot to follow a desired trajectory $$r(t) = \begin{bmatrix}
        r_x(t)\\
    r_y(t)
\end{bmatrix}.$$ Considering the unicycle robot dynamics 
\begin{align}
    \dot X 
    =\begin{bmatrix}
        \dot x\\
        \dot y\\
        \dot \theta
    \end{bmatrix} = 
    \begin{bmatrix}
        v\cos(\theta)\\
        v\sin(\theta)\\
        \omega
    \end{bmatrix}.
    \nonumber
    \end{align}
This desired trajectory is not available a priori and at each time $t$ it is constructed/adjusted from observing the path using the robots camera system for some forward horizon. 

The dynamics governing the motion of our point $(x_h,y_h)$ on the robot is given by (considering the geometry shown in Fig.~\ref{fig:unicycle}) 
    \begin{align}
     X_h &= 
    \begin{bmatrix}
        x_h \\
        y_h
    \end{bmatrix} = 
    \nonumber
    \begin{bmatrix}
        x + b\cos(\theta)\\
        y + b\sin(\theta)
    \end{bmatrix},\\
         \dot X_h &= 
         \nonumber
    \begin{bmatrix}
        \dot x_h \\
        \dot y_h
    \end{bmatrix} = 
    \nonumber
    \begin{bmatrix}
    \nonumber
        \dot x - b\dot \theta\sin(\theta)\\
        \dot y + b\dot \theta \cos(\theta)
    \end{bmatrix} = 
    \begin{bmatrix}
    \nonumber
        u_1\\
        u_2
    \end{bmatrix}.
\end{align}
Discretizing this dynamics we obtain 
\begin{align}
    \begin{bmatrix}
    \nonumber
        x_h(k+1) \\
        y_h(k+1)
    \end{bmatrix} = 
    \begin{bmatrix}
    \nonumber
        x_h(k) + \delta u_1\\
        y_h(k) + \delta u_2
    \end{bmatrix}.
\end{align}
Then, an MPC-based tracking controller can be obtained from solving the following optimization problem at each time step $k$, executing only controller $u(k)$ and repeating the process:
\begin{subequations}\label{eq::MPC}
\begin{align}
    \min_{u_1} J_1(k) &=  \sum_{i=0}^{H_p+H_w-1}(r_x(k+i)-x_h(k+i))^2 \nonumber\\ 
    &+ \frac{1}{\lambda}\sum_{i=0}^{H_u-1}(u_1(k+i))^2,\\
    \min_{u_2} J_2(k) &=  \sum_{i=0}^{H_p+H_w-1}(r_y(k+i)-y_h(k+i))^2 \nonumber\\
    &+ \frac{1}{\lambda}\sum_{i=0}^{H_u-1}(u_2(k+i))^2.
\end{align}
\end{subequations}
Here, $H_p$ and $H_u$ are the length of the prediction horizon and the control horizon, respectively. Also, $H_w$ may be used to alter the control horizon but for the case of simplicity, we put \mbox{$H_w = 0$}. Here, $\lambda$ is the weight factor which is assumed to be constant over the prediction horizon. For our numerical example we use the following values $\lambda = 0.1$, $H_p = H_u = 10$. Additionally, we initialize the robot from $x(0) = -100$ ~and $y(0) = -100$ ~and setting $u_1(i) = u_2(i) = 1, ~ \forall i \in \{0, 1,...,9\}$ when $k=0$. 
 \begin{figure}[t]
  \centering
    \includegraphics[scale=0.38]{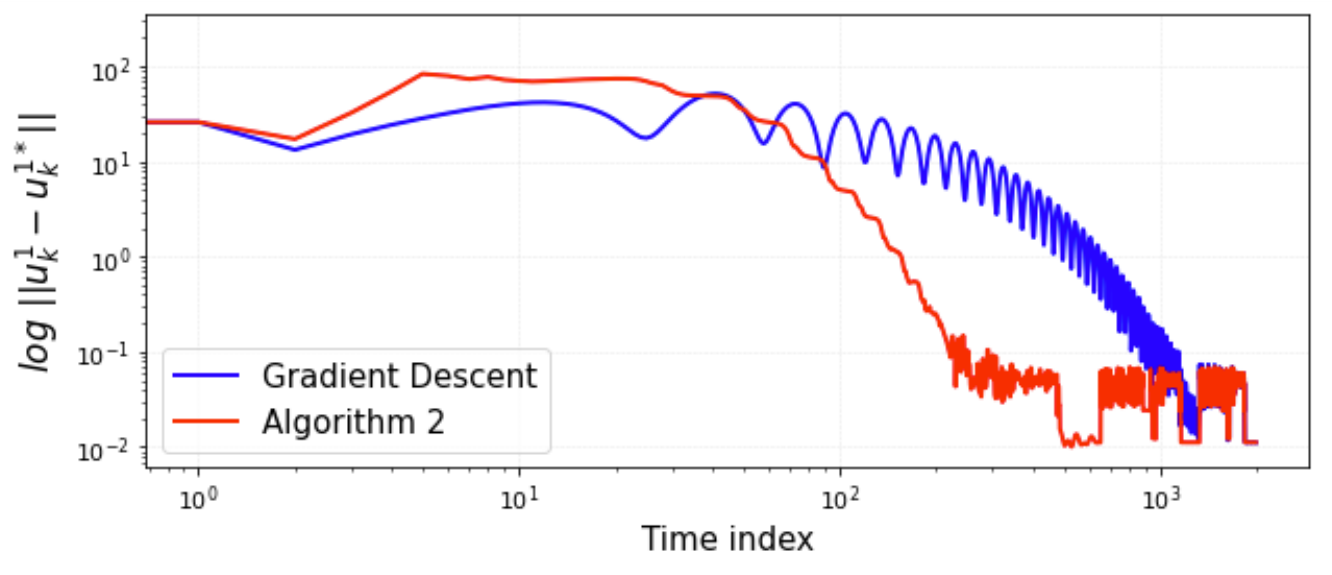}
    \\
     \includegraphics[scale=0.38]{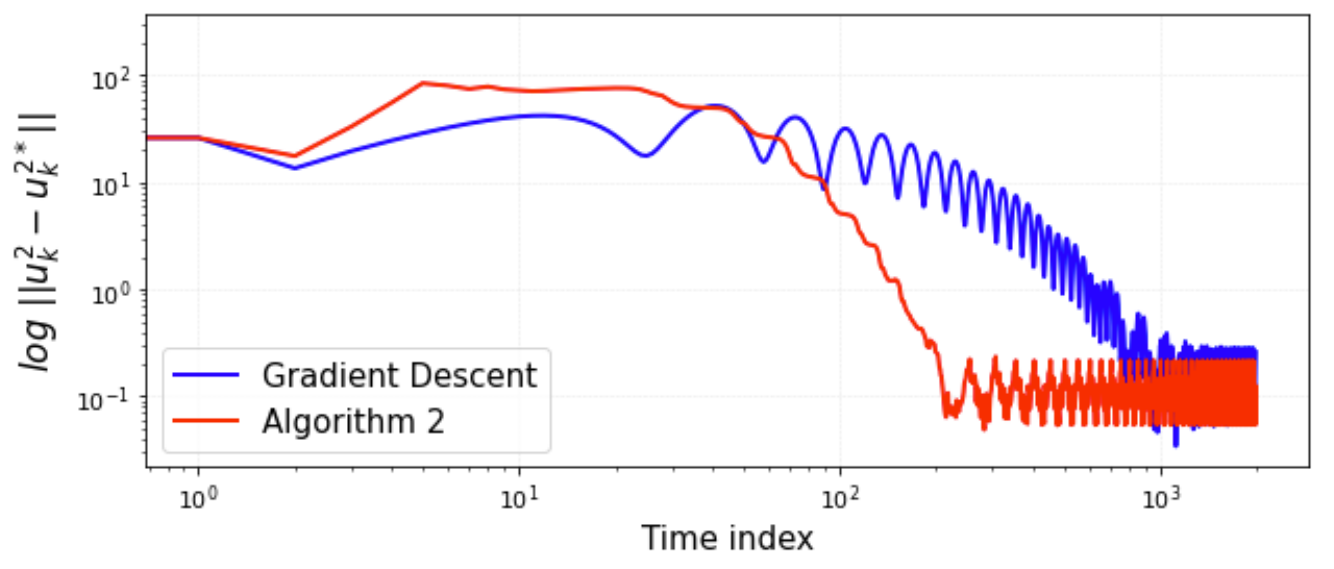}
      \caption{\small{Log error of the controller $u_1$ and $u_2$ with respect to time under executing Algorithm \ref{Alg::2} and gradient descent algorithm. Here $u_k^{l\star}$, $l\in{1,2}$ is the optimal control value which given that the costs are quadratic can be obtained analytically by assuming enough computational power.}}\label{num2_1}
\end{figure}

One of the main challenges with the MPC control is the cost of solving the optimization problems of the form~\eqref{eq::MPC} repeatedly with for example gradient descent algorithm which converges asymptotically at each time step. By looking at the optimization problem~\eqref{eq::MPC} as an instantiation of a time varying cost at time $t_k$ we solve the MPC problem with Algorithm~\ref{Alg::2} whose parameters are set to $\alpha = 0.01$, $\eps = 10^{-1}$ and $\delta = 0.1$ for the sampling timestep. Note here that since we do not have explicit knowledge of the time derivative of the cost, we cannot solve this problem with Algorithm~\ref{Alg::1}. The results are shown in Fig.~\ref{num2_1} which shows that Algorithm~\ref{Alg::1} achieves a better tracking error than gradient descent algorithm. Meaning that we can take less steps to get to a certain tracking error threshold than the gradient descent algorithm, resulting in reducing computation cost.

\subsection{Learning for streaming data}
As a final demonstrative example we demonstrate the performance of the  Algorithm~\ref{Alg::2} in comparison to gradient descent and other first-order static algorithms for a learning problem. This numerical example is taken from the first numerical example of~\cite{ED-AS-SB-LM:19}. This is an example where the exact value of $\nabla_t f(\vect{x}_k,t_k)$ may not be available in practice. Suppose that data points arrive sequentially
at intervals of $\tau>0$
where $\tau$ can be selected as the inter-arrival time of data. The goal is to find the optimal solution of the regression problem at time $t \in T$ based on data $\vect{Z}(t) = \{z(\tau) , \tau\in W_t\}$ where $W_t$ is a sliding window. The problem can be formulated as
\begin{align*} \label{eq::opt}
  \vect{x}^{\star}(t) &= \arg\underset{{\vect{x}\in
    \reals^n}}{\min} 
  \,\,f(\vect{x},\vect{Z}(t)),\quad t\in\real_{>0},
\end{align*}
where $f$ is the quadratic least square cost.
Here we consider an example of a $50$ dimensional time-varying least-squares problem, defined using a sliding window of $50$ data points, for 950 time points. Two big jumps in the solution near time indices $250$ and $550$ are generated by design. Figure~\ref{fig:example-1} shows that our proposed Algorithm~\ref{Alg::2} with $\epsilon=0.01$ outperforms all the known first-order algorithms for static optimization which was simulated in~\cite{ED-AS-SB-LM:19}, even the accelerated algorithms. This can be attributed to the use of implicit knowledge of $\nabla_t f$ in our algorithm, which gives it an anticipatory mechanism about the changes of the cost with time.
Figure~\ref{fig:example-1} is in the semi-logarithmic scale over time.
Interestingly, the gradient descent algorithm converges faster than the well-known accelerated algorithms for the time-varying optimization problem. In addition, the proposed algorithm, as shown, outperforms all common optimization algorithms in terms of convergence.

 \begin{figure}[t]
  \centering
    \includegraphics[scale=0.22]{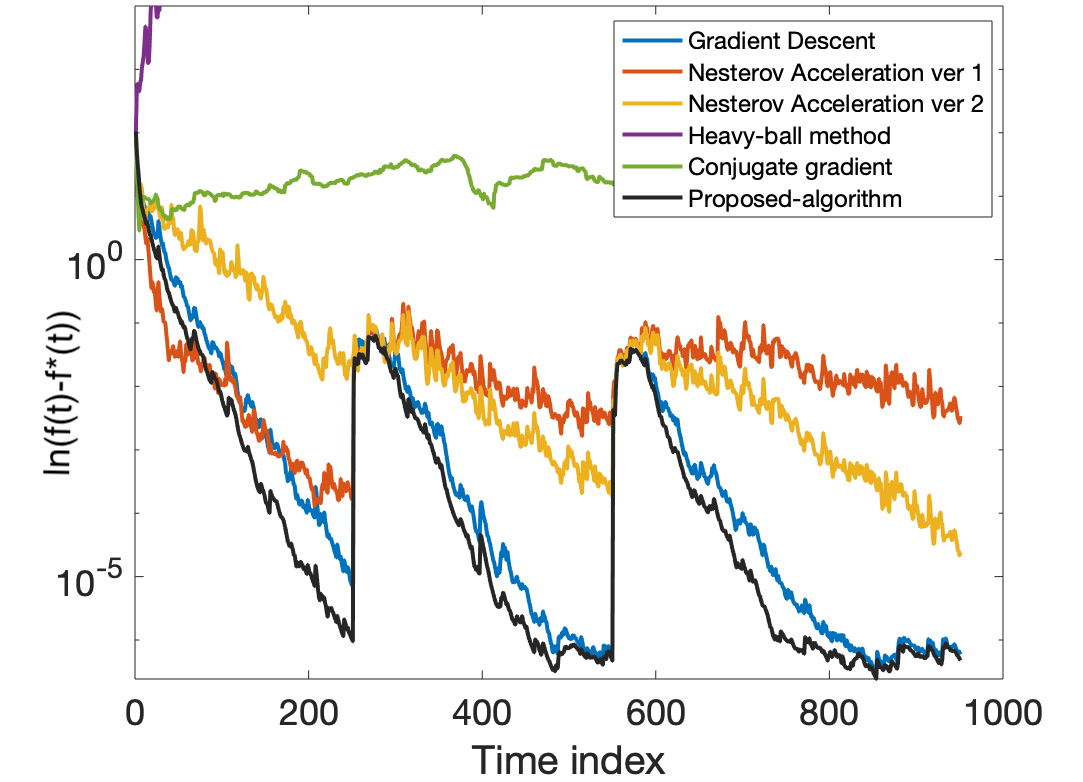}
      \caption{The performance of different algorithms on tracking the optimal objective over an example of a $50$ dimensional time-varying least-squares problem, defined using a sliding window of $50$ data points, for $950$ time points. Two big jumps in the solution near time indices $250$ and $550$ are by design.
Nesterov ver. 1 does not use knowledge of strong convexity, while ver. 2 does. The non-linear conjugate gradient exploits
the quadratic objective to have an exact line-search.}\label{fig:example-1}
\end{figure}

 

\section{Conclusion}\label{sec::conclu}
In this paper,  we proposed two algorithms for a class of convex optimization problems where the cost is time-varying. Our solutions can track the optimal trajectory of the minimizer with bounded steady-state error. Our algorithms are executed only by using the cost function's first-order derivatives, making them computationally efficient for optimization with a time-varying cost function and amenable to non-convex optimization problems. We argued by time-varying cost should not solved by gradient descent and how taking into account how the cost function varies with time can lead to first-order optimization algorithms with lower tracking error. We demonstrated the effectiveness of our proposed algorithms through several examples including an MPC problem and a learning task with a streaming data. Future work devotes to expanding the current algorithms for the problems with non-convex cost functions and investigating distributed implementation of our proposed algorithms for in-network problems where agents communicate according to graph topology.


\bibliographystyle{ieeetr}%
\bibliography{bib/alias,bib/Reference} 

\end{document}